\documentclass[twoside,12pt]{article}
\usepackage[T1]{fontenc}
\usepackage[utf8]{inputenc}
\usepackage{lmodern}
\usepackage{mathrsfs}
\usepackage{amssymb, amsmath, mathtools, amsthm}
\usepackage{graphicx}
\usepackage{color}
\usepackage{float, caption}
\usepackage{enumitem}
\usepackage{booktabs}
\usepackage{microtype}
\usepackage[top=2cm, bottom=2cm, left=2cm, right=2cm]{geometry}
\usepackage[colorlinks=true,linkcolor=blue,citecolor=blue,urlcolor=blue]{hyperref}

\IfFileExists{epsf.tex}{\input{epsf}}{}

\newcommand{\bd}{\begin{description}}
\newcommand{\ed}{\end{description}}
\newcommand{\bi}{\begin{itemize}}
\newcommand{\ei}{\end{itemize}}
\newcommand{\be}{\begin{enumerate}}
\newcommand{\ee}{\end{enumerate}}
\newcommand{\beq}{\begin{equation}}
\newcommand{\eeq}{\end{equation}}
\newcommand{\beqs}{\begin{eqnarray*}}
\newcommand{\eeqs}{\end{eqnarray*}}

\definecolor{DarkGreen}{rgb}{0.2, 0.6, 0.3}

\numberwithin{equation}{section}

\newtheorem{theorem}{Theorem}[section]

\newtheorem{lemma}[theorem]{Lemma}
\newtheorem{problem}[theorem]{Problem}

\newtheorem{remark}{Remark}[section]
\begin{document}

\title{\textbf{A complete solution to the biased Alon-Krivelevich-Spencer-Szab\'o criterion problem for the discrepancy game\footnote{Supported by the National Science Foundation of China
(Nos. 12471329, 12401461 and 12061059).}}}

\author{Yaping Mao\footnote{Academy of Plateau Science and Sustainability, and School of Mathematics and Statistics, Qinghai Normal University, Xining, Qinghai 810008, China. {\tt yapingmao@outlook.com; myp@qhnu.edu.cn}}, \ \ 
Meiqin Wei\footnote{School of Science, Shanghai Maritime University, Shanghai 201306, China. {\tt weimeiqin8912@163.com}}, \ \ 
Gang Yang\footnote{Corresponding author. Faculty of Environment and Information Sciences, Yokohama National University, 79-2 Tokiwadai, Hodogaya-ku, Yokohama 240-8501, Japan. {\tt gangyang98@outlook.com}}}
\date{}
\maketitle

\begin{abstract}
Let \(H=(V,\mathcal E)\) be a finite hypergraph. For positive integers
\(p\) and \(q\), the \((p:q)\)-biased discrepancy game on \(H\) is
played in complete rounds. In each round, Balancer first claims \(p\)
previously unclaimed vertices, and then Unbalancer claims \(q\)
previously unclaimed vertices. Let \(B\) and \(U\) be the final sets of
vertices claimed by Balancer and Unbalancer, respectively. For an edge
\(e\in\mathcal E\), define
$D_e = q|B\cap e|-p|U\cap e|
     = (p+q)|B\cap e|-p|e|$.
Thus \(D_e\) measures the deviation of Balancer's share of \(e\) from
the density \(p/(p+q)\).
In 2005, Alon, Krivelevich, Spencer and Szab\'o proved a Chernoff-type potential
criterion for the unbiased alternating discrepancy game, corresponding
to the case \(p=q=1\), and asked for a biased analogue for general $p,q$. In this paper, we give an affirmative answer of their problem and prove a
complete biased analogue in the complete-round formulation. More
precisely, for every finite hypergraph \(H=(V,\mathcal E)\) and every
fixed bias \((p:q)\), we give an explicit exponential condition under
which Balancer has a strategy forcing
$-L_e^- \le D_e \le L_e^+$
for every  $e\in\mathcal E$,
where \(L_e^+\) and \(L_e^-\) are prescribed edge-dependent target
values.
\vspace{1mm}

\noindent {\bf Keywords:} Discrepancy game; Biased game; Hypergraph discrepancy; Potential method.

\noindent {\bf AMS subject classification 2020:} 05C57; 05C65; 91A46; 05D40.
\end{abstract}

\section{Introduction}
Discrepancy theory has its roots in the classical study of irregularities of
distribution, initiated by van der Corput and developed substantially by Roth
\cite{vanderCorput1935,Roth1954}.  In the combinatorial setting, one studies a
finite set system, or equivalently a hypergraph \(H=(V,\mathcal E)\), and asks
whether the vertex set can be divided into two parts so that every hyperedge is
nearly balanced.  
This hypergraph viewpoint was explicitly
introduced and developed by Beck in his work on discrepancies of integer
sequences and hypergraphs \cite{Beck1981Roth}; an early foundational result in
this direction is the Beck--Fiala theorem \cite{BeckFiala1981}.  Hypergraph
discrepancy has since become a central topic in combinatorics, with deep
connections to probabilistic methods, set systems, geometric discrepancy and
positional games; see, for example,
\cite{Beck,Chazelle,Matousek,Spencer85}.

A \emph{positional game} is usually described by a pair
\((X,\mathcal F)\), where \(X\) is a finite board and
\(\mathcal F\subseteq 2^X\) is a family of distinguished subsets,
often called \emph{target sets} or \emph{winning sets}. Equivalently, one may view
\((X,\mathcal F)\) as a hypergraph whose vertices are the elements of
\(X\) and whose hyperedges are the members of \(\mathcal F\).
The systematic study of positional games goes back to the work of
Hales and Jewett \cite{HJ63} and Erd\H{o}s and Selfridge
\cite{ErdosSelfridge}; see also the monograph of Hefetz,
Krivelevich, Stojakovi\'c and Szab\'o \cite{HKSS14}.

In the classical weak and strong versions of positional games, the
players alternately claim previously unclaimed elements of the board,
and the objective is to occupy all elements of some winning set.
Discrepancy games follow the same hypergraph and claiming framework,
but the objective is quantitative rather than existential: instead of
trying to complete a winning set, one evaluates the final partition of
the board by the discrepancies it induces on all hyperedges.

In this quantitative direction, Alon, Krivelevich, Spencer and
Szab\'o \cite{AKSS} introduced a game-theoretic form of hypergraph
discrepancy. The game is played on the vertex set of a finite hypergraph
\(H=(V,\mathcal E)\). Here and throughout, to \emph{claim} a vertex means
to choose a previously unclaimed vertex and assign it permanently to that
player. In the fair alternating game, Balancer and Unbalancer claim
previously unclaimed vertices one at a time. At the end of the game,
Balancer's vertices are labelled \(+1\) and Unbalancer's vertices are
labelled \(-1\). Thus each edge \(e\in\mathcal E\) receives an edge-sum,
and Balancer's aim is to keep all these edge-sums small in absolute
value.

Alon, Krivelevich, Spencer and Szab\'o \cite{AKSS} proved a Chernoff-type sufficient condition under which Balancer can achieve this aim.  Their argument is based on a potential function, in the spirit of the Erd\H{o}s--Selfridge criterion for Maker--Breaker games \cite{ErdosSelfridge} and Beck's fake probabilistic method \cite{Beck81,Beck}.  Related potential arguments also appear in biased games and pseudorandom graph games; see, for example, Lu's matching-game criterion \cite{Lu}, the game of JumbleG of Frieze, Krivelevich, Pikhurko and Szab\'o \cite{FKPS}, and the Bart--Moe games studied by Hefetz, Krivelevich and Szab\'o \cite{HKS}.

In the notation of the present paper, the main result of Alon,
Krivelevich, Spencer and Szab\'o may be stated as follows
\cite[Theorem~1]{AKSS}.
The final sets of vertices claimed by Balancer and Unbalancer are denoted by $B$ and $U$, respectively.
\begin{theorem}[Alon,
Krivelevich, Spencer and Szab\'o  \cite{AKSS}]\label{thm:AKSS}
Let $H=(V,\mathcal E)$ be a finite hypergraph with
$\mathcal E=\{e_1,\ldots,e_m\}$.
Suppose that $|V|$ is even, and consider the fair alternating discrepancy
game on $V$ in which Balancer moves first.  Let
$b=(b_1,\ldots,b_m)$
be a vector of nonnegative real numbers.  If
\[
        \sum_{i=1}^m
        \exp\!\left\{-\frac{b_i^2}{2|e_i|}\right\}
        \le \frac12,
\]
then Balancer has a strategy guaranteeing
$\bigl||B\cap e_i|-|U\cap e_i|\bigr|\le b_i$
for every $i=1,\ldots,m$.
In particular, Balancer has such a strategy whenever
$b_i\ge \sqrt{2|e_i|\log(2m)}$
for every $i=1,\ldots,m$.
\end{theorem}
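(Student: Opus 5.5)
We prove Theorem~\ref{thm:AKSS} by a potential-function (derandomized Chernoff) argument in which Balancer's moves are analysed in complete rounds of two moves. Fix \(\lambda_i>0\) for each edge, to be chosen later. At any stage of the game let \(B_t,U_t\) be the vertices claimed so far, and set \(d_i=|B_t\cap e_i|-|U_t\cap e_i|\) and \(r_i=|e_i\setminus(B_t\cup U_t)|\). Define
\[
\Phi=\sum_{i=1}^m \frac{\cosh(\lambda_i d_i)}{\cosh(\lambda_i b_i)}\,(\cosh\lambda_i)^{r_i/2}\cdot(\text{correction}),
\]
where the exponent \(r_i/2\) is there because each complete round uses two vertices and changes \(d_i\) by at most \(\pm1\) per vertex. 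The plan has three steps. First, show that Balancer can play so that \(\Phi\) does not increase over any complete round (Balancer move followed by Unbalancer move). Second, show that at the end \(\Phi\ge \cosh(\lambda_i d_i)/\cosh(\lambda_i b_i)\) for each \(i\), so \(\Phi_{\rm end}<1\) forces \(|d_i|<b_i\) (or \(\le b_i\) after a limiting argument). Third, bound \(\Phi_{\rm start}\) by the hypothesis.

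For the round analysis, freeze the position before Balancer's move. For a pair of distinct unclaimed vertices \((x,y)\), let \(\Phi_{x,y}\) be the potential after Balancer takes \(x\) and Unbalancer takes \(y\). The key identity is the pairing/averaging trick. For each edge, taking \(x\) contributes a factor \(e^{\pm\lambda_i}\) on \(d_i\) and taking \(y\) contributes \(e^{\mp\lambda_i}\). Writing \(\cosh(\lambda d)\) as the average of \(e^{\lambda d}\) and \(e^{-\lambda d}\), the edges containing exactly one of \(x,y\) get multiplied by \(e^{\lambda_i}\) or \(e^{-\lambda_i}\). Edges containing both are unchanged in \(d_i\) but lose two free vertices. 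This suggests the choice of potential:
\[
\Phi=\sum_i \frac{\cosh(\lambda_i d_i)}{\cosh(\lambda_i b_i)}\,(\cosh\lambda_i)^{r_i}
\quad\text{or}\quad
\sum_i \frac{\cosh(\lambda_i d_i)\,e^{\lambda_i^2 r_i/2}}{\cosh(\lambda_i b_i)}.
\]
Then \(\Phi_{x,y}+\Phi_{y,x}\le 2\Phi\), since for an edge containing only \(x\) the two outcomes give \(e^{\lambda_i}\) and \(e^{-\lambda_i}\), averaging to \(\cosh\lambda_i\), which is paid for by losing one free vertex. Balancer chooses \(x\) minimizing \(\max_y\Phi_{x,y}\). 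To show some \(x\) achieves \(\max_y\Phi_{x,y}\le\Phi\), I would use the greedy choice: Balancer picks the \(x\) minimizing the "after Balancer's move" potential \(\Psi_x\), defined with \(e^{\pm}\) terms split. The averaging argument of AKSS then shows that Unbalancer's best reply cannot exceed the pre-round value. The cleanest route is to use the one-sided potentials \(\sum_i (e^{\lambda d_i}+e^{-\lambda d_i})\) and the convexity inequality \(\Psi_{\rm after\ U}\le\) average over \(y\). This step, ensuring monotonicity against an adversarial \(y\) rather than an averaged one, is the main obstacle. I expect to resolve it by having Balancer pick \(x\) maximizing the "drop", with the bound \(\min_x\max_y\) controlled through the symmetric sum \(\Phi_{x,y}+\Phi_{y,x}\le2\Phi\) applied to the optimal pair.

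Finally, with the \(e^{\lambda_i^2 r_i/2}\) version (using \(\cosh\lambda\le e^{\lambda^2/2}\)), the initial potential is
\[
\sum_i \frac{e^{\lambda_i^2|e_i|/2}}{\cosh(\lambda_i b_i)}\le 2\sum_i e^{\lambda_i^2|e_i|/2-\lambda_i b_i}.
\]
Choosing \(\lambda_i=b_i/|e_i|\) gives \(2\sum_i e^{-b_i^2/(2|e_i|)}\le 1\). Since \(|V|\) is even, all rounds are complete, and the final potential is at least \(\cosh(\lambda_i d_i)/\cosh(\lambda_i b_i)\). Together with the non-strict inequality, this yields \(|d_i|\le b_i\). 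The "in particular" clause follows by substituting \(b_i=\sqrt{2|e_i|\log(2m)}\), which makes each term equal to \(1/(2m)\).
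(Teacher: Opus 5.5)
There is a genuine gap in the one step that carries the proof: showing that some $x$ satisfies $\Phi_{x,y}\le\Phi$ for \emph{every} reply $y$. You pick the right potential and even name the right strategy (Balancer minimizes the post-move potential $\Psi_x$). But you do not prove that step, and the route you propose for it fails.

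The inequality $\Phi_{x,y}+\Phi_{y,x}\le 2\Phi$ is correct for your $\cosh$ potential. It only says, however, that for each unordered pair at most one of the two orders is bad for Balancer. That is compatible with a cyclic pattern $\Phi_{x,y},\Phi_{y,z},\Phi_{z,x}>\Phi>\Phi_{y,x},\Phi_{z,y},\Phi_{x,z}$, in which every choice of $x$ has a bad reply. Applying it to an ``optimal pair'' gives only $\Phi_{x^*,y}\le 2\Phi-\Phi_{y,x^*}$, which is useless without a lower bound on $\Phi_{y,x^*}$. Appealing to ``the averaging argument of AKSS'' is circular, since that argument is exactly what is being proved.

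What is needed is an antisymmetric, separable bound on the one-round change. Put $f(z)=\Psi_z-\Phi$, where $\Psi_z$ is the potential after Balancer alone claims $z$. Let $g_e$ be the contribution of edge $e$ to $f(z)$ when $z\in e$, computed in the frozen pre-round position. In a full round $(x,y)$:
\begin{itemize}
\item an edge containing $x$ but not $y$ contributes exactly $g_e$;
\item an edge containing $y$ but not $x$ contributes $-g_e$, by the identity $\cosh(\lambda(d+1))+\cosh(\lambda(d-1))=2\cosh\lambda\cosh(\lambda d)$ (at most $-g_e$ with the normalization $e^{\lambda^2/2}\ge\cosh\lambda$);
\item an edge containing both keeps $d_e$ and is multiplied by $(\cosh\lambda_e)^{-2}\le 1$, so it contributes at most $0$.
\end{itemize}
Edges containing both $x$ and $y$ appear in both $f(x)$ and $f(y)$ and cancel. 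Hence $\Phi_{x,y}-\Phi\le f(x)-f(y)$, so the greedy $x$ minimizing $\Psi_x$ works against all $y$.

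This is precisely Lemma~\ref{lem2-2} with $p=q=1$. The paper derives the theorem from Theorem~\ref{thm:main} with $R^\pm_{1,1}=\cosh$, splitting the potential into the one-sided terms $e^{\pm\lambda D_e}$; your $\cosh$ potential is this split sum up to a per-edge constant. The two facts it uses are $M-1\le 1-C$ and $CM\le 1$ whenever $C+M\le 2$.

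Once this step is supplied, the rest of your outline goes through: the bound $1/\cosh(\lambda b)\le 2e^{-\lambda b}$, the inequality $\cosh\lambda\le e^{\lambda^2/2}$, the choice $\lambda_i=b_i/|e_i|$, and the end-of-game contradiction. No limiting argument is needed.
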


In the notation of the present paper, Beck's uniform biased discrepancy
theorem may be stated as follows \cite[Theorem~17.5]{Beck}.

\begin{theorem}[Beck \cite{Beck}]\label{thm:beck-uniform}
Let $H=(V,\mathcal E)$ be a finite $n$-uniform hypergraph, and put
$m=|\mathcal E|$. In the $(p:q)$-game, Balancer claims $p$ previously
unclaimed vertices and then Unbalancer claims $q$ previously unclaimed
vertices in each complete round.  Suppose that Balancer moves first.
Then Balancer has a strategy such that, at the end of the game, for every
edge $e\in\mathcal E$,
\[
        \frac{p-\alpha}{p+q}\,n
        <
        |B\cap e|
        <
        \frac{p+\alpha}{p+q}\,n,
\]
where
\[
        \alpha
        =
        \left(
        1+
        O\!\left(
        pq\sqrt{\frac{\log m}{(p+q)n}}
        \right)
        \right)
        2pq
        \sqrt{\frac{\log m}{(p+q)n}} .
\]
Equivalently, Balancer can force
\[
        \left|
        |B\cap e|-\frac{p}{p+q}n
        \right|
        <
        \frac{\alpha}{p+q}n
        \qquad\text{for every }e\in\mathcal E .
\]
\end{theorem}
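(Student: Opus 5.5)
The plan is a Beck-type potential argument on complete rounds, with the exponential weights biased to the density $p/(p+q)$. I will track the signed quantity $D_e=q|B\cap e|-p|U\cap e|$. Balancer's target $\bigl||B\cap e|-\tfrac{p}{p+q}n\bigr|<\tfrac{\alpha}{p+q}n$ is equivalent to $|D_e|<\alpha n$.

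\textbf{The potential.} Fix a parameter $\lambda>0$. For a partial play, let $B_t,U_t$ be the current sets and put $D_e(t)=q|B_t\cap e|-p|U_t\cap e|$. Let $u_e(t)$ be the number of still unclaimed vertices of $e$. To each edge attach two terms,
\[
\Phi^\pm_e(t)=\exp\{\pm\lambda D_e(t)-\lambda\alpha n\}\,\bigl(\mu_\pm(\lambda)\bigr)^{u_e(t)/(p+q)} .
\]
Here $\mu_\pm(\lambda)$ is the ``fair'' per-round moment generating factor. Concretely, in a hypothetical random round each unclaimed vertex of $e$ goes to Balancer with probability $p/(p+q)$, contributing $+q$, and to Unbalancer otherwise, contributing $-p$. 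This makes $\mu_\pm(\lambda)^{1/(p+q)}=\tfrac{p}{p+q}e^{\pm\lambda q}+\tfrac{q}{p+q}e^{\mp\lambda p}$ per vertex.

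Let $\Phi(t)=\sum_e(\Phi^+_e+\Phi^-_e)$. At the end of the game $u_e=0$, so a violated edge contributes at least $1$. It therefore suffices to show that Balancer can keep $\Phi$ essentially nonincreasing over each complete round, and that $\Phi(0)<1$.

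\textbf{Initial potential.} We have $\Phi(0)=\sum_e e^{-\lambda\alpha n}\bigl(\tfrac{p}{p+q}e^{\pm\lambda q}+\tfrac{q}{p+q}e^{\mp\lambda p}\bigr)^n$. Expanding to second order gives $\log(\cdot)\le \lambda^2 pq/2+O(\lambda^3 pq(p+q)\max(p,q))$ per vertex. Hence $\Phi(0)\le 2m\exp\{-\lambda\alpha n+\tfrac{\lambda^2 pq}{2}n(1+O(\lambda\cdot))\}$. Optimising with $\lambda\approx\alpha/(pq)$ makes this $<1$ once $\alpha^2n/(2pq)\gtrsim\log(2m)$. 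That is the order $\alpha\sim\sqrt{pq\log m/n}$, up to normalisation conventions relating the quoted $\alpha$ to $(p+q)$; the $1+O(\cdot)$ factor in the statement is exactly the third-order Taylor error.

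\textbf{Balancer's strategy within a round.} Balancer plays greedily: choose the $p$ vertices one at a time, each maximising the decrease of $\Phi$. Then show that for every reply of Unbalancer's $q$ vertices, the net change over the round is $\le 0$.

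The key inequality is an averaging argument. Define the marginal weight $w(v)=\sum_{e\ni v}(\text{effect on }\Phi_e^\pm)$. Under the fair randomisation, the expected change of $\Phi$ over a round is $0$ by construction of $\mu_\pm$. Balancer picking the $p$ vertices of largest benefit and Unbalancer picking $q$ arbitrary ones should then dominate the random split.

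\textbf{Main obstacle.} The difficulty is that the change over a round is not linear in the chosen vertices. Vertices sharing edges interact multiplicatively, and Unbalancer moves after seeing Balancer's choices.

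I would handle this as in Beck's proof. First bound each multiplicative factor $e^{\lambda q}$, $e^{-\lambda p}$ by its linearisation plus a second-order error of size $O(\lambda^2(p+q)^2)$ times the current potential. Then compare with the $\mu$ factor, which absorbs these errors into the $O(\cdot)$ term of $\alpha$. Finally use the ordering ``Balancer's $p$ best $\ge$ average $\ge$ Unbalancer's $q$'' on the linear parts.

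If the linearisation error proves too large, the fallback is to process the round as $p+q$ single-vertex steps. These are $p$ Balancer steps, each chosen greedily, followed by $q$ adversarial steps, with the potential's $\mu$-exponent adjusted after each step.
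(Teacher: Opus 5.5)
Your overall architecture matches the paper's: two exponential tails per edge, a per-vertex normalizing factor raised to the number of unclaimed vertices, non-increase over complete rounds, and a contradiction at the end. The gap is the normalization. With your ``fair'' factor the one-round non-increase is false when $q\ge2$, and the defect is second order, not third.

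Put $\rho_+=\mu_+^{1/(p+q)}$, and let $B=e^{\lambda q}/\rho_+$ and $A=e^{-\lambda p}/\rho_+$ be the multipliers of $\Phi^+_e$ under one Balancer hit and one Unbalancer hit. Your $\rho_+$ is characterized by $pB+qA=p+q$. In other words, it prices Unbalancer's $q$ hits as if they fell into $q$ different edges, but he may put them all into one edge.

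Take a position whose unclaimed vertices form $p+q$ pairwise disjoint $q$-sets. Each is the unclaimed remainder of an edge with the same upper weight $w$ and negligible lower weight, and all other edges are fully claimed. Since $B^a\ge1+a(B-1)$, Balancer's hits raise these terms by at least $p(B-1)w$ however he places them (greedily, by exact minimization, or as single steps). Some $q$-set stays untouched, and Unbalancer claims all of it. Hence $\Phi$ grows by at least
\[
w\bigl(pB+A^q-(p+1)\bigr)=w\bigl(A^q-1-q(A-1)\bigr)=\tbinom{q}{2}p^2\lambda^2w+O(\lambda^3w)>0,
\]
by strict convexity of $x\mapsto x^q$ and $A=1-p\lambda+O(\lambda^2)$. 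For example, $p=1$, $q=2$, $\lambda=0.1$ gives $B+A^2\approx2.011>2$. The same position with $D_e=0$ and both tails of weight $w$ gives an increase of $p^2q(q-1)\lambda^2w+O(\lambda^3w)$.

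This excess has the same order as $\log\rho_+=\frac{pq}{2}\lambda^2+O(\lambda^3)$. It therefore cannot be absorbed into the $1+O(\cdot)$ factor of $\alpha$; it changes the leading constant. Indeed, the configuration shows that a per-vertex factor $R$ can work only if $pe^{\lambda q}/R+(e^{-\lambda p}/R)^q\le p+1$, i.e.\ $R\ge R^+_{p,q}(\lambda)$. The $\lambda^2$-coefficient of $\log R^+_{p,q}$, namely $\frac{p(p+1)q^2}{2(p+q)}$, exceeds that of $\log\rho_+$ by the factor $\frac{(p+1)q}{p+q}>1$ when $q\ge2$. So for $q\ge2$ your target $\alpha^2\approx 2pq\log(2m)/n$ is out of reach of this method. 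Its mismatch with the stated $\alpha^2\approx4p^2q^2\log m/((p+q)n)$ is not a normalization convention. (For $q=1$ your $\rho_\pm$ coincides with the paper's $R^\pm_{p,1}$.) The paper's Remark after the proof of Theorem~\ref{thm:main} records exactly this obstruction.

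The paper repairs this as follows.
\begin{itemize}
\item It takes the per-vertex factor at the threshold. $R^+_{p,q}(t)$ is the root of $pe^{qt}/R+(e^{-pt}/R)^q=p+1$, and $R^-_{p,q}$ is its mirror image. Then in each tail the Balancer-hit multiplier $C$ and the Unbalancer-hit multiplier $M$ satisfy $pC+M^q=p+1$.
\item Balancer picks his whole $p$-set $S$ at once as a minimizer of $F(S)=\sum_e\bigl(\Phi^+_e(B^+_e)^{|e\cap S|}+\Phi^-_e(U^-_e)^{|e\cap S|}\bigr)$.
\item Summing the exchange inequalities $F(S-x+y)\ge F(S)$ over $x\in S$, $y\in T$ gives $\sum_r w_rH_{C_r}(i_r,j_r)\ge0$ (Lemma~\ref{lem2-2}).
\item The scalar bound $C^iM^j-1\le-\frac1qH_C(i,j)$ of Lemma~\ref{lem-1} turns this into exact non-increase of $\Phi$, with no error terms. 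Its key input is $M^j\le(1-p(C-1))^{j/q}\le1-\frac jq\,p(C-1)$.
\end{itemize}

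This exchange argument is what controls the quadratic interaction between the two players' hits. Your ordering ``Balancer's $p$ best $\ge$ average $\ge$ Unbalancer's $q$'' only handles the linear terms. A greedy one-at-a-time choice need not satisfy the exchange inequalities. Absorbing the quadratic terms edge by edge would in general need a normalization larger than $R^\pm_{p,q}$.

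Finally, $\log R^\pm_{p,q}(x)\le\frac{p^2q^2}{p+q}x^2+O(x^3)$, using $p(p+1)/2\le p^2$. Take $b=p^2q^2/(p+q)$, $L=2\sqrt{bn\log(2m)}\,\bigl(1+C'\sqrt{\log(2m)/n}\bigr)$ with a large constant $C'=C'(p,q)$, and $t=s=L/(2bn)$. Then Theorem~\ref{thm:main} yields the stated $\alpha$, with $\log(2m)$ in place of $\log m$.
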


We shall use the following biased version of the discrepancy game.  Let $p,q$ be positive integers.  In the \textit{$(p:q)$-game}, a complete round consists of two parts: first Balancer claims $p$ previously unclaimed vertices, and then Unbalancer claims $q$ previously unclaimed vertices.  In the complete-round formulation considered below, the number of vertices of the board is divisible by $p+q$, so that all vertices are claimed after an integral number of complete rounds.

At the end of their paper, Alon, Krivelevich, Spencer and Szab\'o \cite{AKSS} posed the corresponding biased problem for general hypergraphs and general $p,q$. We record it in the following form.

\begin{problem}[Alon,
Krivelevich, Spencer and Szab\'o  \cite{AKSS}]\label{prob:AKSS-biased}
Let $p,q$ be positive integers.  Establish a discrepancy criterion for the $(p:q)$-game on a finite hypergraph $H=(V,\mathcal E)$.  In particular, give conditions on $H$ which guarantee that Balancer can force his final share of every edge $e\in\mathcal E$ to be close to $p|e|/(p+q)$.
\end{problem}

For an edge $e\in\mathcal E$, the natural target density for Balancer is $p/(p+q)$.  We measure the signed deviation from this target by
\begin{equation}
\label{eq:D}
        D_e=q|B\cap e|-p|U\cap e|
        =(p+q)|B\cap e|-p|e|.
\end{equation}
Thus controlling $D_e$ is exactly equivalent to controlling the deviation of $|B\cap e|$ from its target value $p|e|/(p+q)$.

Problem~\ref{prob:AKSS-biased} is genuinely different from the fair alternating game.  In the fair game, after a move of Balancer, Unbalancer claims only one vertex before Balancer moves again. In the $(p:q)$-game, a complete round may affect the same edge several times. 
We shall therefore regard each complete round as a single step.  Balancer first chooses a set of $p$ unclaimed vertices, and then Unbalancer chooses a set of $q$ vertices from those which remain unclaimed. Balancer's choice will be made as one $p$-set, rather than as $p$ independent single choices.

We now give the notation used in the criterion. Empty edges have $D_e=0$ and will be omitted.  For each edge $e\in\mathcal E$, we prescribe two nonnegative numbers $L_e^+$ and $L_e^-$. The number $L_e^+$ bounds the allowed positive deviation of $D_e$, while $L_e^-$ bounds the allowed negative deviation of $D_e$.  Thus the desired final estimate for $e$ is
$-L_e^-\le D_e\le L_e^+$.
We also associate two positive parameters $t_e$ and $s_e$ with $e$. The parameter $t_e$ is used for the upper inequality $D_e\le L_e^+$, while $s_e$ is used for the lower inequality $D_e\ge -L_e^-$. During the game, $c_e$ denotes the number of vertices of $e$ which have already been claimed.

We next introduce two auxiliary functions. For the upper inequality, the relevant exponential expression is $e^{tD_e}$. If Balancer claims one vertex of $e$, then $D_e$ increases by $q$, and this expression is multiplied by $e^{qt}$. If Unbalancer claims one vertex of $e$, then $D_e$ decreases by $p$, and this expression is multiplied by $e^{-pt}$. We divide both one-vertex factors by the same positive number $R$. The upper factor $R^+_{p,q}(t)$ is defined to be the positive solution of
\begin{equation}
\label{eq:R-plus}
        p\frac{e^{qt}}{R}
        +
        \left(\frac{e^{-pt}}{R}\right)^q
        =
        p+1 .
\end{equation}

For the lower inequality, the relevant exponential expression is $e^{-sD_e}$. If Balancer claims one vertex of $e$, this expression is multiplied by $e^{-qs}$; if Unbalancer claims one vertex of $e$, it is multiplied by $e^{ps}$. The lower factor $R^-_{p,q}(s)$ is defined to be the positive solution of
\begin{equation}
\label{eq:R-minus}
        p\frac{e^{-qs}}{R}
        +
        \left(\frac{e^{ps}}{R}\right)^q
        =
        p+1.
\end{equation}
The left hand sides of \eqref{eq:R-plus} and \eqref{eq:R-minus} are continuous and strictly decreasing functions of $R>0$, tend to infinity as $R\to0^+$, and tend to zero as $R\to\infty$. Hence the positive solutions are well defined and unique.

\begin{theorem}
\label{thm:main}
Let $H=(V,\mathcal E)$ be a finite hypergraph with no empty edges. Suppose that $|V|$ is divisible by $p+q$, and that the $(p:q)$-game on $V$ is played in complete rounds with Balancer moving first in each round. For every edge $e\in\mathcal E$, let $L_e^+,L_e^-\ge0$. Assume that there exist parameters $t_e,s_e>0$ such that
\begin{equation}
\label{eq:main-condition}
        \sum_{e\in\mathcal E} e^{-t_eL_e^+}
        \bigl(R^+_{p,q}(t_e)\bigr)^{|e|}
        +
        \sum_{e\in\mathcal E} e^{-s_eL_e^-}
        \bigl(R^-_{p,q}(s_e)\bigr)^{|e|}
        \le 1.
\end{equation}
Then Balancer has a strategy guaranteeing, at the end of the game,
\[
        -L_e^-\le q|B\cap e|-p|U\cap e|\le L_e^+
        \qquad\text{for all }e\in\mathcal E.
\]
Equivalently,
\[
        -\frac{L_e^-}{p+q}\le
        |B\cap e|-\frac{p}{p+q}|e|\le
        \frac{L_e^+}{p+q}
        \qquad\text{for all }e\in\mathcal E.
\]
\end{theorem}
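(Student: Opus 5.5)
The plan is a potential-function (Erd\H{o}s--Selfridge / Beck type) argument in which a whole complete round is one step. At any moment of the game, let $D_e$ be the current value of $q|B\cap e|-p|U\cap e|$ on the vertices claimed so far, and let $c_e$ be the number of claimed vertices of $e$. Write $R_e^+=R^+_{p,q}(t_e)$ and $R_e^-=R^-_{p,q}(s_e)$. Define
\[
\Phi=\sum_{e\in\mathcal E} e^{t_e(D_e-L_e^+)}\bigl(R_e^+\bigr)^{|e|-c_e}
+\sum_{e\in\mathcal E} e^{-s_e(D_e+L_e^-)}\bigl(R_e^-\bigr)^{|e|-c_e}.
\]
At the start $D_e=0$ and $c_e=0$, so $\Phi$ equals the left-hand side of \eqref{eq:main-condition}, which is at most $1$. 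At the end $c_e=|e|$, so each term is $e^{t_e(D_e-L_e^+)}$ or $e^{-s_e(D_e+L_e^-)}$. If some edge violated $D_e\le L_e^+$ (resp.\ $D_e\ge -L_e^-$), then $D_e\ne 0$, so not every vertex of $e$ went to one player, and its term would be strictly larger than $1$. Hence the final potential would exceed $1$. The theorem therefore follows once Balancer can play so that $\Phi$ never increases over a complete round, and the rest of the plan is to prove this one-round claim.

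Consider one round. Balancer picks a $p$-set $P$, and then Unbalancer picks a $q$-set $Q$ disjoint from $P$. For each upper term, write $a_e=e^{qt_e}/R_e^+$ and $b_e=e^{-pt_e}/R_e^+$. The term is multiplied by $a_e^{|P\cap e|}b_e^{|Q\cap e|}$, and \eqref{eq:R-plus} says exactly $p\,a_e+b_e^{\,q}=p+1$. The lower terms behave the same way with $a'_e=e^{-qs_e}/R_e^-$ and $b'_e=e^{ps_e}/R_e^-$, using \eqref{eq:R-minus}. So the change of $\Phi$ is a sum over terms of the current weight times $\bigl(a_e^{|P\cap e|}b_e^{|Q\cap e|}-1\bigr)$. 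I want to show $\min_P\max_Q\Phi_{\rm new}\le\Phi_{\rm old}$.

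The step I expect to be the main obstacle is this min–max inequality. The equation $pa+b^q=p+1$ suggests splitting a round into $p+1$ ``blocks'': the $p$ single vertices of Balancer, each carrying factor $a$, and Unbalancer's whole $q$-set, carrying factor $b^q$ when it lies inside one edge. The first thing I would try is the weighted AM--GM inequality
\[
\prod_{i=1}^{p+1}x_i\le\frac{1}{p+1}\sum_{i=1}^{p+1}x_i^{\,p+1}.
\]
Together with monotonicity of the factors, I would use it to reduce the one-round estimate to a comparison between the effect of Balancer claiming the vertices of $P$ and the effect of Unbalancer claiming its $q$ vertices.

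Concretely, I would let Balancer choose $P$ greedily, or as the $p$-set minimising the worst-case new potential. Then I would show that if every $P$ admitted a reply $Q$ with $\Phi_{\rm new}>\Phi_{\rm old}$, averaging over a suitable family of $p$-sets $P$ (for instance $P$ ranging over subsets of $Q$ together with the would-be greedy choice) would contradict the identity $pa_e+b_e^q=p+1$ summed with the current weights. If this direct averaging fails because a single round can hit an edge several times, the fallback is to treat the round one vertex at a time. That would use intermediate potentials in which the pending factor $R^{|e|-c_e}$ is split as $R^{p+1}$ per round, charging $a$ to each Balancer vertex and $b^q$ to Unbalancer's block, and then check monotonicity with convexity of $x\mapsto x^{k}$.
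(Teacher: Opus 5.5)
Your potential $\Phi$ and the bookkeeping at the start and end of the game coincide with the paper's. One small slip: $D_e\ne0$ does not imply that $e$ was split between the players, but you don't need it, since $D_e>L_e^+\ge0$ alone gives $e^{t_e(D_e-L_e^+)}>1$.

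The genuine gap is that the one-round inequality $\min_P\max_Q\Phi_{\mathrm{new}}\le\Phi_{\mathrm{old}}$, which is the whole substance of the theorem, is not proved, and the tools you propose do not prove it.
\begin{itemize}
\item Weighted AM--GM applied to $pa_e+b_e^q=p+1$ yields only per-edge facts such as $a_e^pb_e^q\le1$. No per-edge bound can suffice: an edge containing all of $P$ and none of $Q$ has its upper term multiplied by $a_e^p>1$. That increase must be paid for by decreases on other terms, and this coupling can only come from how Balancer chose $P$.
\item ``Monotonicity of the factors'' is not uniform. On the lower terms Balancer's factor is $a'_e<1$ and Unbalancer's is $b'_e>1$, the reverse of the upper terms.
\item The scheme ``every $P$ has a bad reply $Q_P$'' does not average into the normalization identity, because $Q_P$ changes with $P$. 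Minimizing the worst-case potential is a legitimate strategy, but its optimality conditions involve Unbalancer's unknown best replies, which you cannot evaluate.
\item The vertex-by-vertex fallback fails for the reason you anticipate. An edge meets $Q$ in some $j\in\{0,\dots,q\}$ vertices, so its factor is $b_e^j$, not $b_e^q$, and no intermediate potential is actually exhibited.
\end{itemize}

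The missing idea is a local-exchange argument with a specific objective.
\begin{itemize}
\item \textbf{Balancer's choice.} Let Balancer choose $S$ minimizing the potential immediately after his own move, $F(S)=\sum_r w_rC_r^{|E_r\cap S|}$. Here $r$ runs over all upper and lower terms, $w_r$ is the current value of the term, $E_r$ is the unclaimed part of the edge, and $C_r=a_e$ or $a'_e$.
\item \textbf{Summing the swaps.} Fix an arbitrary reply $T$. Optimality gives $F(S-x+y)\ge F(S)$ for each of the $pq$ swaps $x\in S$, $y\in T$; this is the family your averaging hint gestures at. When these are summed, a term with $i=|E_r\cap S|$ and $j=|E_r\cap T|$ contributes exactly $H_C(i,j)=(C-1)\bigl((p-i)jC^i-i(q-j)C^{i-1}\bigr)$. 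Hence $\sum_rw_rH_{C_r}(i_r,j_r)\ge0$.
\item \textbf{One-term inequality.} Whenever $pC+M^q\le p+1$ (with $M=b_e$ or $b'_e$), one has $C^iM^j-1\le-\frac1qH_C(i,j)$. It follows from $M^j=(M^q)^{j/q}\le\bigl(1-p(C-1)\bigr)^{j/q}\le1-\frac jq\,p(C-1)$, by concavity of $x\mapsto x^{j/q}$. One then compares $C^i-1$ with $i(C-1)C^{i-1}$ separately for $C\ge1$ and $C<1$, which handles the upper and lower terms at once.
\end{itemize}
Multiplying the one-term inequality by $w_r$ and summing gives $\Phi_{\mathrm{new}}-\Phi_{\mathrm{old}}\le-\frac1q\sum_rw_rH_{C_r}(i_r,j_r)\le0$ for every reply $T$.
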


Theorem~\ref{thm:main} should be compared with Theorems~\ref{thm:AKSS}
and~\ref{thm:beck-uniform} as follows. First, when $p=q=1$, equations
\eqref{eq:R-plus} and \eqref{eq:R-minus} give $R^+_{1,1}(t)=R^-_{1,1}(t)=\cosh t$.
Thus, by taking
        $L_e^+=L_e^-=b_e$,
        and 
        $t_e=s_e=\frac{b_e}{|e|}$,
and using the elementary inequality $\cosh x\le \exp(x^2/2)$, the condition
in Theorem~\ref{thm:main} reduces to
\[
        2\sum_{e\in\mathcal E}
        \exp\!\left\{-\frac{b_e^2}{2|e|}\right\}
        \le 1.
\]
This is precisely the condition in Theorem~\ref{thm:AKSS}. Hence
Theorem~\ref{thm:main} recovers the fair alternating discrepancy criterion
of Alon, Krivelevich, Spencer and Szab\'o.

Second, Beck's Theorem~\ref{thm:beck-uniform} is a biased result, but it is
uniform: all edges have the same size $n$, and a single common error term is
used for every edge.  Theorem~\ref{thm:main} gives an edge-dependent
criterion. The hypergraph need not be uniform, the upper and lower target
values $L_e^+$ and $L_e^-$ may vary with $e$, and the exponential parameters
$t_e$ and $s_e$ may also vary with $e$. In the special case of an
$n$-uniform hypergraph with common targets and common parameters,
Theorem~\ref{thm:main} gives a Beck-type Chernoff-scale estimate around the
density $p/(p+q)$. 
The formal specializations of Theorem~\ref{thm:main} yielding
Theorems~\ref{thm:AKSS} and~\ref{thm:beck-uniform} are recorded after the
proof of Theorem~\ref{thm:main}; see Section~\ref{sec:proof-main}. 

We finally state the pseudo-random graph application. This is the
biased-density version of the graph application suggested by
Problem~\ref{prob:AKSS-biased}.  For disjoint sets $S,T\subseteq[n]$, let
$e_G(S,T)$ denote the number of edges of $G$ with one endpoint in $S$ and the
other in $T$.

\begin{theorem}\label{thm:pseudorandom}
Fix positive integers $p,q$, and put
\[
        \rho=\frac{p}{p+q}.
\]
There exist constants $C=C(p,q)$ and $\eta_0=\eta_0(p,q)>0$ such that the
following holds. Let $0<\eta\le\eta_0$, let $n\ge3$, and suppose that
\[
        \eta^3 n\ge C\log(e/\eta).
\]
In the $(p:q)$-game played on the edge set of $K_n$, Balancer has a strategy
to build a graph $G_B$ satisfying
\[
        \left|d_{G_B}(v)-\rho(n-1)\right|
        \le
        C\sqrt{n\log n}+O_{p,q}(1)
        \qquad\text{for every }v\in[n],
\]
and, for every pair of disjoint sets $S,T\subseteq[n]$ with
$|S|,|T|\ge\eta n$,
\[
        \left|
        \frac{e_{G_B}(S,T)}{|S||T|}-\rho
        \right|
        \le \eta .
\]
\end{theorem}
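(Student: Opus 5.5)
We would deduce Theorem~\ref{thm:pseudorandom} from Theorem~\ref{thm:main}, applied to the hypergraph $H$ whose vertex set is $E(K_n)$. There are two families of edges. The \emph{star edges} are $e_v=\{uv:u\ne v\}$ for $v\in[n]$, so $|e_v|=n-1$. The \emph{cut edges} are $e_{S,T}=\{st:s\in S,t\in T\}$ for disjoint $S,T$ with $|S|,|T|\ge\eta n$, so $|e_{S,T}|=|S||T|$. Theorem~\ref{thm:main} requires $|V|=\binom n2$ to be divisible by $p+q$. To arrange this we first remove (or pre-assign arbitrarily) a set $F$ of at most $p+q-1$ edges of $K_n$, play on the rest, and hand the edges of $F$ out arbitrarily. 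This changes every degree and every $e_{G_B}(S,T)$ by at most $p+q$, which accounts for the $O_{p,q}(1)$ term in the degree bound. In the cut bound it costs at most $(p+q)/(\eta n)^2\le \eta/2$, since $\eta^3n\ge C$. Note that $D_e=(p+q)\bigl(|B\cap e|-\rho|e|\bigr)$, so a bound $|D_e|\le L_e$ translates into a bound $L_e/(p+q)$ on the deviation of $|B\cap e|$ from $\rho|e|$.

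The analytic core is the asymptotics of $R^\pm_{p,q}$ for small argument. At $t=0$, equation \eqref{eq:R-plus} gives $R=1$. Write $R=e^{r}$. Taking logarithms and expanding $p e^{qt-r}+e^{-q(pt+r)}=p+1$ to second order, the first-order terms give $pqt-pr-pqt-qr=0$, hence $r=O(t^2)$. More precisely, we expect
\[
\log R^+_{p,q}(t)=\kappa t^2+O(t^3)\quad\text{for some }\kappa=\kappa(p,q)>0 .
\]
In the case $p=q=1$ this is $\log\cosh t$, so $\kappa=1/2$. We only need an explicit bound $\log R^\pm_{p,q}(t)\le K t^2$ for $0<t\le t_0(p,q)$, and this follows from the implicit function theorem together with compactness. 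The same argument applies to $R^-$.

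With this estimate, we choose $t_e=s_e=L_e/(2K|e|)$ and $L_e^\pm=L_e$. Each term of \eqref{eq:main-condition} is then at most $\exp\bigl(-L_e^2/(4K|e|)\bigr)$, provided $t_e\le t_0$, that is, $L_e\le 2Kt_0|e|$.

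For the star edges we take $L=\sqrt{8K(n-1)\log(4n)}$. Their total contribution to \eqref{eq:main-condition} is then at most $2n\cdot(4n)^{-2}\le 1/4$, and the resulting degree deviation is $L/(p+q)\le C\sqrt{n\log n}$. The condition $t_e\le t_0$ holds because $\log n/n$ is small, which follows from $\eta^3n\ge C\log(e/\eta)$ together with $n\ge3$ once $C$ is large; small $n$ can be absorbed into $C$.

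For the cut edges we take $L_{S,T}=(p+q)\tfrac{\eta}{2}|S||T|$, which gives the required $\eta/2$ accuracy before correcting for $F$. The exponent is then $(p+q)^2\eta^2|S||T|/(16K)$. The condition $t\le t_0$ becomes $\eta\le \eta_0$, which is exactly where $\eta_0$ comes from. Grouping the cut edges by $a=|S|$ and $b=|T|$, their total contribution is at most
\[
\sum_{a,b\ge\eta n}\binom na\binom nb\exp(-c\eta^2ab).
\]
Use $\binom na\le (en/a)^a\le (e/\eta)^a$. Then each term is at most $\exp\bigl(a\log(e/\eta)+b\log(e/\eta)-c\eta^2ab\bigr)$. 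Since $b\ge\eta n$, we have $c\eta^2ab\ge c\eta^3n\,a$, which exceeds $2a\log(e/\eta)$ when $C$ is large, and symmetrically in $b$. Each term is therefore at most $\exp\bigl(-\tfrac c2\eta^3n\,(a+b)/2\bigr)$. Summing over $a,b$ gives at most $1/4$ for large $C$.

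Altogether the left side of \eqref{eq:main-condition} is at most $1/2\le 1$, and Theorem~\ref{thm:main} yields Balancer's strategy. The main obstacle we anticipate is the uniform quadratic bound on $\log R^\pm_{p,q}(t)$ near $0$, in particular ensuring that it holds with the same constant for both signs. The remaining work is the bookkeeping for the divisibility correction and the union-bound sums.
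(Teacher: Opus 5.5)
Your argument is correct in substance and shares the paper's framework: star and bipartite hyperedges, the local bound $\log R^\pm_{p,q}(x)\le Kx^2$ (the paper's Lemma~\ref{lem:local-quadratic}, with $A$ in place of your $K$), $t_e=L_e/(2K|e|)$, and $L_{S,T}=(p+q)\frac{\eta}{2}|S||T|$. The difference is in how the cut condition is encoded.

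The paper uses only the sets $F_{S,T}$ with $|S|=|T|=k=\lceil\eta n\rceil$. Its union bound is therefore over a single size, $\binom nk^2\le(e/\eta)^{2k}$ against $\exp(-\gamma\eta^2k^2)$. It then passes to arbitrary $S,T$ with $|S|,|T|\ge\eta n$ by averaging over uniformly random $k$-subsets $S'\subseteq S$, $T'\subseteq T$. This works because each $S$--$T$ edge lies in $F_{S',T'}$ with probability $k^2/(|S||T|)$, and the deviation bound is linear, so it survives averaging.

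You instead include every admissible pair $(S,T)$ directly. This works because $t_{S,T}=(p+q)\eta/(4K)$ does not depend on the sizes, and your splitting $c\eta^2ab\ge\frac12 c\eta^3 n(a+b)$ bounds each term by $x^{a+b}$ with $x=\exp(-\frac c4\eta^3 n)\le e^{-cC/4}$. So the double sum over sizes is indeed small. Your route removes the sampling step at the cost of a slightly heavier union bound; the paper's keeps the auxiliary hypergraph and the count minimal.

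One step needs repair: your divisibility device does not correspond to the actual game. Balancer cannot set aside or pre-assign a fixed set $F$, because Unbalancer may claim edges of $F$ in any round. The auxiliary game on $E(K_n)\setminus F$ then contains a round in which Unbalancer claims fewer than $q$ board elements. Lemma~\ref{lem2-2} does not cover such rounds, and a missing Unbalancer hit removes a factor $U_e^+<1$ from the upper potential, so the potential can genuinely increase.

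The clean fix is to take the exceptional elements to be those claimed in the final incomplete round.
\begin{enumerate}
\item Run the potential argument for the $\lfloor\binom n2/(p+q)\rfloor$ complete rounds. Each begins with at least $p+q$ unclaimed elements, so Lemma~\ref{lem2-2} applies.
\item Since $R^\pm_{p,q}>1$ (Lemma~\ref{fact:R-basic}), at every stage $\Phi^+_{e,i}\ge e^{t_e(D_{e,i}-L_e^+)}$ and $\Phi^-_{e,i}\ge e^{-s_e(D_{e,i}+L_e^-)}$. Hence both bounds already hold after the last complete round.
\item The remaining at most $p+q-1$ claims shift every degree and every $e_{G_B}(S,T)$ by $O_{p,q}(1)$, exactly as in your accounting.
\end{enumerate}
This is also how the paper's brief remark about the last incomplete round should be read.
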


The rest of the paper is organized as follows. In
Section~\ref{sec:proof-main}, we prove Theorem~\ref{thm:main}; immediately after the proof, we record the formal derivations of
Theorems~\ref{thm:AKSS} and~\ref{thm:beck-uniform} from Theorem~\ref{thm:main}.
In Section~\ref{sec:proof-pseudorandom}, we apply Theorem~\ref{thm:main}
to the edge game on $K_n$ and prove Theorem~\ref{thm:pseudorandom}.

\section{Proof of Theorem~\ref{thm:main}}\label{sec:proof-main}

We now prove two elementary lemmas which will be used to choose Balancer's $p$ vertices in one round. The first lemma is a one-edge estimate. 

Let $C>0$.
For integers $0\le i\le p$ and $0\le j\le q$, define
\begin{equation}\label{hcij}
H_C(i,j) = (C-1)\Bigl((p-i)jC^i-i(q-j)C^{i-1}\Bigr).
\end{equation}

\begin{lemma}\label{lem-1}
Let $C, M>0$.
For all integers $0\le i\le p$ and $0\le j\le q$, if $pC+M^q\le p+1$, then
\[
        C^iM^j-1+\frac1qH_C(i,j)\le0.
\]
\end{lemma}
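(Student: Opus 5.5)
The plan is to eliminate $M$ using the constraint $pC+M^q\le p+1$, which leaves an inequality in $C$ alone. That inequality will then follow from a single algebraic identity.

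\emph{Step 1: bound $M^j$.} Since $M>0$, the hypothesis gives $0<M^q\le p+1-pC$. In particular $C<(p+1)/p$, so $x:=p+1-pC$ is positive. For $0\le j\le q$ the exponent $j/q$ lies in $[0,1]$, so $y\mapsto y^{j/q}$ is concave on $(0,\infty)$ and lies below its tangent line at $y=1$ (Bernoulli's inequality). Hence
\[
M^j=(M^q)^{j/q}\le x^{j/q}\le 1+\frac jq(x-1)=1-\frac{jp}{q}(C-1).
\]
The case $j=0$ is trivial. Note that this bound holds regardless of the sign of $C-1$.

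\emph{Step 2: substitute.} Because $C^i>0$, Step 1 gives
\[
C^iM^j-1+\tfrac1qH_C(i,j)\le C^i-\tfrac{jp}{q}(C-1)C^i-1+\tfrac{C-1}{q}\Bigl((p-i)jC^i-i(q-j)C^{i-1}\Bigr).
\]
The terms $\pm\frac{jp}{q}(C-1)C^i$ cancel. The remaining $(C-1)$ terms are $-\frac{C-1}{q}\,iC^{i-1}\bigl(jC+q-j\bigr)$. Writing $jC+q-j=q+j(C-1)$, the right-hand side becomes
\[
\Bigl(C^i-1-i(C-1)C^{i-1}\Bigr)-\frac{ij}{q}(C-1)^2C^{i-1}.
\]

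\emph{Step 3: sign check.} The second term is clearly $\le0$. For the first, use $C^i-1=(C-1)\sum_{k=0}^{i-1}C^k$ to write
\[
C^i-1-i(C-1)C^{i-1}=(C-1)\sum_{k=0}^{i-1}\bigl(C^k-C^{i-1}\bigr).
\]
If $C\ge1$, each summand is $\le0$ and $C-1\ge0$. If $C<1$, each summand is $\ge0$ and $C-1<0$. In either case the product is $\le0$; when $i=0$ the expression is identically $0$. This proves the lemma.

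The only real idea is Step 1: replacing $M^j$ by its linear upper bound in $C$, which is exactly what makes the $p$-dependent terms of $H_C$ cancel. I expect the main obstacle to be keeping the signs straight when $C<1$. The Bernoulli bound is unaffected by that case, but the final sign check must be done in both cases, as above.
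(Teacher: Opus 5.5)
Your proof is correct and follows essentially the same route as the paper: the same concavity (Bernoulli) bound $M^j\le 1-\frac{jp}{q}(C-1)$, the same cancellation of the $p$-dependent terms, and the same reduction to showing $C^i-1-i(C-1)C^{i-1}\bigl(1+\frac jq(C-1)\bigr)\le 0$. The only difference is cosmetic. You split off the manifestly nonpositive term $-\frac{ij}{q}(C-1)^2C^{i-1}$ and treat both signs of $C-1$ with the single factorization $(C-1)\sum_{k=0}^{i-1}(C^k-C^{i-1})$, whereas the paper keeps the factor $1+\rho(C-1)$ intact and argues the cases $C\ge1$ and $0<C\le1$ separately.
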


\begin{proof}
Put $\delta=C-1$ and $\rho=j/q$.  Then $0\le \rho\le1$, and the assumption gives $M^q\le p+1-pC=1-p\delta$. Since $M^q>0$, we have $1-p\delta>0$. We use the elementary fact that, for $0\le\rho\le1$, the function $x\mapsto x^\rho$ is concave on $(0,\infty)$; hence $x^\rho\le 1+\rho(x-1)$ for every $x>0$.  Applying this with $x=1-p\delta$, we get $(1-p\delta)^\rho\le 1-\rho p\delta$. Therefore $M^j=(M^q)^\rho\le (1-p\delta)^\rho\le 1-\rho p\delta$, and so 
\begin{equation}\label{euq-cm}
C^iM^j-1\le C^i-1-\rho p\delta C^i.
\end{equation}
Since $j=q\rho$, it follows from \eqref{hcij} that 
\begin{equation}\label{qhc}
\frac1qH_C(i,j)=\delta\bigl((p-i)\rho C^i-i(1-\rho)C^{i-1}\bigr). 
\end{equation}
Combining \eqref{euq-cm} and \eqref{qhc} gives
\[
\begin{aligned}
C^iM^j-1+\frac1qH_C(i,j)
&\le C^i-1-\rho p\delta C^i
+\delta\bigl((p-i)\rho C^i-i(1-\rho)C^{i-1}\bigr)  \\
&= C^i-1-i\rho\delta C^i-i(1-\rho)\delta C^{i-1}  \\
&= C^i-1-i\delta C^{i-1}(\rho C+1-\rho)  \\
&= C^i-1-i(C-1)C^{i-1}\bigl(1+\rho(C-1)\bigr).
\end{aligned}
\]
It remains to show that the last expression is nonpositive. If $i=0$, this is immediate. Assume $i\ge1$.
First suppose that $C\ge1$. Then $C^i-1=(C-1)(1+C+\cdots+C^{i-1})\le i(C-1)C^{i-1}$. Since $1+\rho(C-1)\ge1$, it follows that $C^i-1\le i(C-1)C^{i-1}\bigl(1+\rho(C-1)\bigr)$, as required.

It remains to consider $0<C\le1$.  Write $\eta=1-C$. Then $0\le\eta<1$ and $1+\rho(C-1)=1-\rho\eta\in[0,1]$. The desired inequality is equivalent to $1-C^i\ge i(1-C)C^{i-1}\bigl(1-\rho(1-C)\bigr)$. But $1-C^i=(1-C)(1+C+\cdots+C^{i-1})\ge i(1-C)C^{i-1}$, because $0<C\le1$ implies $1,C,\ldots,C^{i-1}\ge C^{i-1}$. Since $1-\rho(1-C)\le1$, the desired inequality follows. This proves $C^iM^j-1+\frac1qH_C(i,j)\le0$ in all cases.
\end{proof}

The next lemma is the simultaneous form of the preceding estimate. It is the point at which Balancer chooses the whole $p$-set at once.

\begin{lemma}\label{lem2-2}
Let $X$ be a finite set with $|X|\ge p+q$.  Let $\mathcal I$ be a finite index set. For each $r\in\mathcal I$, let $E_r\subseteq X$, let $w_r\ge0$, and let $C_r,M_r>0$ satisfy
$pC_r+M_r^q\le p+1$.
Let $S$ be a $p$-subset of $X$ such that
\begin{equation}
\label{eq:selection-objective}
\sum_{r\in\mathcal I}w_r C_r^{|E_r\cap S|}=\min \left\{\sum_{r\in\mathcal I}w_r C_r^{|E_r\cap A|}\,|\, A\subseteq X, |A|=p\right\}
\end{equation}
over all $p$-subsets of $X$.  Then for every $q$-subset
$T\subseteq X\setminus S$,
\[
        \sum_{r\in\mathcal I}
        w_r\left(C_r^{|E_r\cap S|}M_r^{|E_r\cap T|}-1\right)
        \le0.
\]
\end{lemma}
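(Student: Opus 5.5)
The plan is to combine the one-edge estimate of Lemma~\ref{lem-1} with an exchange (swap) argument that exploits the minimality \eqref{eq:selection-objective} of $S$. Fix a $q$-subset $T\subseteq X\setminus S$. For each $r\in\mathcal I$ write $i_r=|E_r\cap S|$ and $j_r=|E_r\cap T|$, so that $0\le i_r\le p$ and $0\le j_r\le q$. Since $pC_r+M_r^q\le p+1$, Lemma~\ref{lem-1} applies to each $r$ and gives
\[
C_r^{i_r}M_r^{j_r}-1\le -\frac1q H_{C_r}(i_r,j_r).
\]
Multiplying by $w_r\ge0$ and summing over $r$, it therefore suffices to prove
\[
\sum_{r\in\mathcal I} w_r H_{C_r}(i_r,j_r)\ge 0 .
\]

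To prove this inequality I would use single swaps. For each pair $(x,y)$ with $x\in S$ and $y\in T$, put $S_{x,y}=(S\setminus\{x\})\cup\{y\}$. Because $T\cap S=\emptyset$, this is again a $p$-subset of $X$. Minimality of $S$ then gives
\[
\sum_r w_r\bigl(C_r^{|E_r\cap S_{x,y}|}-C_r^{i_r}\bigr)\ge0 .
\]
For a fixed $r$, the exponent changes as follows. If $x\notin E_r$ and $y\in E_r$, it rises to $i_r+1$, and the term equals $(C_r-1)C_r^{i_r}$. If $x\in E_r$ and $y\notin E_r$, it drops to $i_r-1$, and the term equals $-(C_r-1)C_r^{i_r-1}$. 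In the two remaining cases the term is $0$.

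Next I sum these $pq$ inequalities over all pairs $(x,y)\in S\times T$ and interchange the order of summation. For fixed $r$, the number of pairs with $x\in S\setminus E_r$ and $y\in T\cap E_r$ is $(p-i_r)j_r$. The number of pairs with $x\in S\cap E_r$ and $y\in T\setminus E_r$ is $i_r(q-j_r)$. The total contribution of $r$ is therefore
\[
w_r(C_r-1)\bigl((p-i_r)j_rC_r^{i_r}-i_r(q-j_r)C_r^{i_r-1}\bigr)=w_rH_{C_r}(i_r,j_r),
\]
exactly matching \eqref{hcij}. This yields $\sum_r w_rH_{C_r}(i_r,j_r)\ge0$, which completes the argument.

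I expect the main obstacle to be the bookkeeping in the swap sum, namely checking that the pair counts produce precisely $H_C$ rather than some other weighting. The remaining points are routine. The case $i_r=0$ has no $C^{i-1}$ term, since its coefficient vanishes. The hypothesis $|X|\ge p+q$ is used only to guarantee that a $q$-subset $T$ exists, and disjointness of $T$ from $S$ keeps each $S_{x,y}$ a legitimate competitor in \eqref{eq:selection-objective}.
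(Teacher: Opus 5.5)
Your proposal is correct and follows essentially the same argument as the paper. The paper also sums the single-swap inequalities $F(S-x+y)-F(S)\ge 0$ over all $(x,y)\in S\times T$ to get $\sum_r w_r H_{C_r}(|E_r\cap S|,|E_r\cap T|)\ge 0$, and then combines this termwise with Lemma~\ref{lem-1}; the only difference is that you perform the reduction to the $H$-inequality first, whereas the paper computes the swap sum first.
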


\begin{proof}
Let $F(S)=\sum_{r\in\mathcal I}w_r C_r^{|E_r\cap S|}$.
Fix a $q$-subset $T\subseteq X\setminus S$. For $x\in S$ and $y\in T$, write
        $S-x+y=(S\setminus\{x\})\cup\{y\}$, which is again a $p$-subset of $X$.  
Let
\begin{equation}\label{delta}
\Delta
        =
        \sum_{x\in S}\sum_{y\in T}
        \bigl(F(S-x+y)-F(S)\bigr).
\end{equation}
Since $F(S)=\min \left\{F(V)\,|\, V\subseteq X, |V|=p\right\}$, we have
        $F(S-x+y)-F(S)\ge0$ for $x\in S$ and $y\in T$.
Therefore, $\Delta\ge0$.
We now compute the same quantity by expanding the definition of $F$. Since
\[
        F(A)=\sum_{r\in\mathcal I}w_r C_r^{|E_r\cap A|}
        \qquad(A\subseteq X,\ |A|=p),
\]
it follows from \eqref{delta} that
\begin{equation}\label{euquation-2-2}
\begin{aligned}
        \Delta
        &=
        \sum_{x\in S}\sum_{y\in T}
        \sum_{r\in\mathcal I}
        w_r
        \left(
        C_r^{|E_r\cap(S-x+y)|}
        -
        C_r^{|E_r\cap S|}
        \right)  \\[0.2cm]
        &=
        \sum_{r\in\mathcal I}
        w_r
        \sum_{x\in S}\sum_{y\in T}
        \left(
        C_r^{|E_r\cap(S-x+y)|}
        -
        C_r^{|E_r\cap S|}
        \right).
\end{aligned}
\end{equation}
Fix $r\in\mathcal I$, and let
$i=|E_r\cap S|,
 j=|E_r\cap T|$.
We compute the inner double sum for this fixed $r$. If
$x\notin E_r$ and $y\in E_r$, then the exponent
$|E_r\cap S|$ increases from $i$ to $i+1$, and the change is
$C_r^{i+1}-C_r^i=(C_r-1)C_r^i$.
There are $(p-i)j$ ordered pairs $(x,y)\in S\times T$ of this type.
If $x\in E_r$ and $y\notin E_r$, then the exponent decreases from $i$ to
$i-1$, and the change is
$C_r^{i-1}-C_r^i=-(C_r-1)C_r^{i-1}$.
There are $i(q-j)$ ordered pairs of this type.
In the two remaining cases, namely when both $x,y$ lie in $E_r$ or when
neither of them lies in $E_r$, the number $|E_r\cap S|$ does not change,
and the contribution is zero.  Hence the inner double sum for the fixed
index $r$ is
$(p-i)j(C_r-1)C_r^i
        -
        i(q-j)(C_r-1)C_r^{i-1}$.
Thus
\begin{equation}\label{e2-3}
\begin{aligned}
        &\sum_{x\in S}\sum_{y\in T}
        \left(
        C_r^{|E_r\cap(S-x+y)|}
        -
        C_r^{|E_r\cap S|}
        \right)       \\
        &\qquad =
        (C_r-1)
        \Bigl((p-i)jC_r^i-i(q-j)C_r^{i-1}\Bigr)
        =
        H_{C_r}(i,j).
\end{aligned}
\end{equation}
Since $i=|E_r\cap S|$ and $j=|E_r\cap T|$, it follows from \eqref{euquation-2-2} and \eqref{e2-3} that 
$$\Delta
        =
        \sum_{r\in\mathcal I}
        w_r
        H_{C_r}(|E_r\cap S|,|E_r\cap T|).$$
Combining this identity with $\Delta\ge0$ gives
$$\sum_{r\in\mathcal I}
        w_r H_{C_r}(|E_r\cap S|,|E_r\cap T|)
        \ge0.$$
Applying Lemma \ref{lem-1} to each index $r$, with
$C=C_r$, $M=M_r$, $i=|E_r\cap S|$, and
        $j=|E_r\cap T|$,
we obtain
\[
        C_r^{|E_r\cap S|}M_r^{|E_r\cap T|}-1
        \le
        -\frac1q H_{C_r}(|E_r\cap S|,|E_r\cap T|).
\]
Multiplying by $w_r$ and summing over $r\in\mathcal I$ gives
\[
\sum_{r\in\mathcal I}
        w_r\left(C_r^{|E_r\cap S|}
        M_r^{|E_r\cap T|}-1\right)  \le
        -\frac1q
        \sum_{r\in\mathcal I}
        w_r H_{C_r}(|E_r\cap S|,|E_r\cap T|)
        \le0.
\]
This proves the lemma.
\end{proof}

Recall that
$R^+_{p,q}(t)$ is the positive solution of
\eqref{eq:R-plus}, and
 $R^-_{p,q}(s)$ is the positive solution of
\eqref{eq:R-minus}.
We now record the estimates for $R^+_{p,q}(t)$ and $R^-_{p,q}(s)$.

\begin{lemma}\label{fact:R-basic}
For $t>0$ and $s>0$,
$e^{-pt}<R^+_{p,q}(t)<e^{qt}$ and
$e^{-qs}<R^-_{p,q}(s)<e^{ps}$.
Moreover,
$R^+_{p,q}(t)>1$ and
        $R^-_{p,q}(s)>1$.
\end{lemma}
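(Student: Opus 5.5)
We prove both pairs of bounds by studying the function
\[
f(R)=p\frac{e^{qt}}{R}+\left(\frac{e^{-pt}}{R}\right)^q .
\]
It is continuous and strictly decreasing on $(0,\infty)$, so $R^+_{p,q}(t)$ is the unique root of $f(R)=p+1$. Consequently, for any $R_0>0$, the condition $f(R_0)>p+1$ forces $R^+_{p,q}(t)>R_0$, and $f(R_0)<p+1$ forces $R^+_{p,q}(t)<R_0$. Every claimed inequality therefore reduces to evaluating $f$ at one test point and comparing with $p+1$. The lower factor is handled the same way with
\[
g(R)=p\frac{e^{-qs}}{R}+\left(\frac{e^{ps}}{R}\right)^q .
\]

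For the upper factor we evaluate at three points.
\begin{itemize}
\item At $R_0=e^{qt}$ we get $f(e^{qt})=p+e^{-q(p+q)t}$. This is less than $p+1$ because $t>0$, so $R^+_{p,q}(t)<e^{qt}$.
\item At $R_0=1$ we get $f(1)=pe^{qt}+e^{-pqt}$. We must show it exceeds $p+1$, i.e.\ that $\varphi(t)=pe^{qt}+e^{-pqt}-(p+1)$ is positive for $t>0$. This holds since $\varphi(0)=0$ and $\varphi'(t)=pq\bigl(e^{qt}-e^{-pqt}\bigr)>0$ for $t>0$. Hence $R^+_{p,q}(t)>1$.
\item The lower bound $R^+_{p,q}(t)>e^{-pt}$ then follows immediately from $1>e^{-pt}$.
\end{itemize}

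For the lower factor we evaluate $g$ similarly.
\begin{itemize}
\item At $R_0=e^{ps}$ we get $g(e^{ps})=pe^{-(p+q)s}+1$. This is less than $p+1$, so $R^-_{p,q}(s)<e^{ps}$.
\item At $R_0=1$ we get $g(1)=pe^{-qs}+e^{pqs}$. Set $\psi(s)=pe^{-qs}+e^{pqs}-(p+1)$. Then $\psi(0)=0$ and $\psi'(s)=pq\bigl(e^{pqs}-e^{-qs}\bigr)>0$ for $s>0$, so $g(1)>p+1$ and $R^-_{p,q}(s)>1$.
\item Then $R^-_{p,q}(s)>1>e^{-qs}$.
\end{itemize}

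Nothing here is genuinely difficult. The only nontrivial step is establishing $R>1$, which needs the derivative argument, or alternatively weighted AM--GM or convexity of the exponential. All other bounds are direct substitutions.
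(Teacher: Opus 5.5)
Your proof is correct and follows essentially the paper's route: use strict monotonicity of the defining function in $R$ and compare its value with $p+1$ at the test points $e^{qt}$ (resp.\ $e^{ps}$) and $1$, handling the $R=1$ case by a one-variable calculus argument. The differences are minor: you deduce $R^+_{p,q}(t)>e^{-pt}$ and $R^-_{p,q}(s)>e^{-qs}$ directly from $R>1$ instead of testing $R=e^{-pt}$ (resp.\ $e^{-qs}$) separately, and you show $\varphi$ is increasing via $\varphi'>0$ on $(0,\infty)$ where the paper uses $f(0)=p+1$, $f'(0)=0$ and $f''>0$.
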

\begin{proof}
First consider the defining equation \eqref{eq:R-plus} for $R^+_{p,q}(t)$,
If $R\le e^{-pt}$, then, by \eqref{eq:R-plus}, we have
$\left(\frac{e^{-pt}}{R}\right)^q\ge1$
and
\[
        p\frac{e^{qt}}{R}
        \ge
        p e^{(p+q)t}
        >p.
\]
Hence the left hand side of \eqref{eq:R-plus} is larger than $p+1$, and so the positive solution cannot satisfy $R\le e^{-pt}$.  Therefore, $R^+_{p,q}(t)>e^{-pt}$.
If $R\ge e^{qt}$, then
$p\frac{e^{qt}}{R}\le p$
and
\[
        \left(\frac{e^{-pt}}{R}\right)^q
        \le
        e^{-q(p+q)t}
        <1.
\]
Hence the left hand side of \eqref{eq:R-plus} is smaller than $p+1$, and so the positive solution cannot satisfy $R\ge e^{qt}$. Therefore,
$R^+_{p,q}(t)<e^{qt}$.
This proves
$e^{-pt}<R^+_{p,q}(t)<e^{qt}$.

The estimates for $R^-_{p,q}(s)$ are obtained in the same way from \eqref{eq:R-minus}
Indeed, on the left hand side of \eqref{eq:R-minus}, if $R\le e^{-qs}$, then the first term is at least $p$ and the second term is larger than $1$; if $R\ge e^{ps}$, then the second term is at most $1$ and the first term is smaller than $p$.  Hence, $e^{-qs}<R^-_{p,q}(s)<e^{ps}$.

It remains to show that the two normalizing factors are larger than one.  Substituting $R=1$ in the defining equation for $R^+_{p,q}(t)$ gives
$f(t)=p e^{qt}+e^{-pqt}$.
Then, $f(0)=p+1$, $f'(0)=0$,
and
\[
        f''(t)=p q^2 e^{qt}+p^2q^2 e^{-pqt}>0
        \qquad(t\ge0).
\]
Thus $f(t)>p+1$ for every $t>0$. Since the left hand side of \eqref{pplus} for $R^+_{p,q}(t)$ is strictly decreasing as a function of $R$, the positive solution satisfies
$R^+_{p,q}(t)>1$.
Similarly, substituting $R=1$ in \eqref{eq:R-minus} for $R^-_{p,q}(s)$ gives
$g(s)=p e^{-qs}+e^{pqs}$.
We have $g(0)=p+1$,
        $g'(0)=0,$
and
\[
        g''(s)=p q^2 e^{-qs}+p^2q^2 e^{pqs}>0
        \qquad(s\ge0).
\]
Hence $g(s)>p+1$ for every $s>0$.  Since the left hand side of the defining equation for $R^-_{p,q}(s)$ is strictly decreasing as a function of $R$, we obtain
$R^-_{p,q}(s)>1$.
\end{proof}

We now give the proof of Theorem \ref{thm:main}.
\begin{proof}[Proof of Theorem~\ref{thm:main}]
 Let
$N=|V|/(p+q)$
be the number of complete rounds.  For each $i=0,1,\ldots,N$, let $B_i$ and $U_i$ be the sets already claimed by Balancer and Unbalancer after the first $i$ complete rounds.  Thus
$B_0=U_0=\emptyset$,
$B_N=B$,
and $U_N=U$.
Let
$X_i=V\setminus (B_i\cup U_i)$
be the set of vertices which are still unclaimed after the first $i$ complete rounds.
For an edge $e$, define the discrepancy after round $i$ by
$D_{e,i}=q|B_i\cap e|-p|U_i\cap e|$
and let
$c_{e,i}=|(B_i\cup U_i)\cap e|$
be the number of already claimed vertices of $e$ after round $i$. Thus $|e|-c_{e,i}$ is the number of vertices of $e$ which are still unclaimed after round $i$. Note that $D_{e,N}=D_e$ and $c_{e,N}=|e|$.

For each edge $e$ and each $i=0,1,\ldots,N$, we define two quantities, one for the upper deviation and one for the lower deviation:
\[
        \Phi_{e,i}^+
        =
        e^{-t_eL_e^+}
        \bigl(R^+_{p,q}(t_e)\bigr)^{|e|-c_{e,i}}
        e^{t_eD_{e,i}}
\]
and
\[
        \Phi_{e,i}^-
        =
        e^{-s_eL_e^-}
        \bigl(R^-_{p,q}(s_e)\bigr)^{|e|-c_{e,i}}
        e^{-s_eD_{e,i}}.
\]
Let
\begin{equation}\label{equphi-indexed}
    \Phi_i=\sum_{e\in\mathcal E}(\Phi_{e,i}^++\Phi_{e,i}^-).
\end{equation}
At the beginning of the game, $D_{e,0}=0$ and $c_{e,0}=0$ for every edge $e$. Hence the initial value of $\Phi_0$ is
\[
        \sum_{e\in\mathcal E}
        e^{-t_eL_e^+}\bigl(R^+_{p,q}(t_e)\bigr)^{|e|}
        +
        \sum_{e\in\mathcal E}
        e^{-s_eL_e^-}\bigl(R^-_{p,q}(s_e)\bigr)^{|e|}.
\]
By \eqref{eq:main-condition}, this initial value is at most $1$, that is,
$\Phi_0\le1$.

It remains to describe Balancer's strategy and to show that the total potential does not increase after each complete round.  Consider the beginning of round $i+1$, where $0\le i\le N-1$.  The set of still unclaimed vertices is $X_i$.  Since the game is played in complete rounds and has not ended, $|X_i|\ge p+q$.

For the upper quantity $\Phi_{e,i}^+$, let us first consider the effect of one Balancer hit in $e$. Suppose that Balancer claims an unclaimed vertex $x\in e$. Then $|B_i\cap e|$ increases by $1$, while $|U_i\cap e|$ does not change. Since
$D_{e,i}=q|B_i\cap e|-p|U_i\cap e|$,
the value of $D_{e,i}$ increases by $q$. At the same time, the number $c_{e,i}$ of already claimed vertices of $e$ increases by $1$, and hence the remaining number $|e|-c_{e,i}$ decreases by $1$. Therefore $\Phi_{e,i}^+$ is multiplied by
\[
        \frac{
        e^{-t_eL_e^+}
        \bigl(R^+_{p,q}(t_e)\bigr)^{|e|-c_{e,i}-1}
        e^{t_e(D_{e,i}+q)}
        }{
        e^{-t_eL_e^+}
        \bigl(R^+_{p,q}(t_e)\bigr)^{|e|-c_{e,i}}
        e^{t_eD_{e,i}}
        }
        =
        \frac{e^{qt_e}}{R^+_{p,q}(t_e)}.
\]
Thus the multiplier caused by one Balancer hit in $e$ is
\[
        B_e^+
        =
        \frac{e^{qt_e}}{R^+_{p,q}(t_e)}.
\]
Similarly, if Unbalancer claims an unclaimed vertex of $e$, then $|U_i\cap e|$ increases by $1$, while $|B_i\cap e|$ does not change.  Hence $D_{e,i}$ decreases by $p$, and again $|e|-c_{e,i}$ decreases by $1$. Therefore the multiplier caused by one Unbalancer hit in $e$ is
\[
        U_e^+
        =
        \frac{
        e^{-t_eL_e^+}
        \bigl(R^+_{p,q}(t_e)\bigr)^{|e|-c_{e,i}-1}
        e^{t_e(D_{e,i}-p)}
        }{
        e^{-t_eL_e^+}
        \bigl(R^+_{p,q}(t_e)\bigr)^{|e|-c_{e,i}}
        e^{t_eD_{e,i}}
        }
        =
        \frac{e^{-pt_e}}{R^+_{p,q}(t_e)}.
\]
By the defining equation \eqref{eq:R-plus}, we have
$pB_e^+ +(U_e^+)^q=p+1$.
Moreover, Lemma~\ref{fact:R-basic} gives
$B_e^+>1$ and
        $0<U_e^+<1$.
Thus, for the upper deviation, a Balancer hit is the increasing factor and an Unbalancer hit is the decreasing factor.

For the lower quantity $\Phi_{e,i}^-$, the signs are reversed. A Balancer hit in $e$ increases $D_{e,i}$ by $q$, so the factor $e^{-s_eD_{e,i}}$ is multiplied by $e^{-qs_e}$.  Taking also the decrease of $|e|-c_{e,i}$ into account, the Balancer-hit multiplier is
\[
        U_e^-
        =
        \frac{e^{-qs_e}}{R^-_{p,q}(s_e)}.
\]
An Unbalancer hit in $e$ decreases $D_{e,i}$ by $p$, so $e^{-s_eD_{e,i}}$ is multiplied by $e^{ps_e}$; the corresponding multiplier is
\[
        B_e^-
        =
        \frac{e^{ps_e}}{R^-_{p,q}(s_e)}.
\]
By \eqref{eq:R-minus},
$pU_e^- +(B_e^-)^q=p+1$.
Again, by Lemma~\ref{fact:R-basic},
$0<U_e^-<1$ and
        $B_e^->1$.
Thus, for the lower deviation, a Balancer hit is the decreasing factor and an Unbalancer hit is the increasing factor.

Balancer now chooses a $p$-subset $S_{i+1}\subseteq X_i$, where $0\le i\le N-1$, minimizing
\begin{equation}
\label{eq:strategy-objective-indexed}
        \sum_{e\in\mathcal E}
        \Phi_{e,i}^+(B_e^+)^{|e\cap S|}
        +
        \sum_{e\in\mathcal E}
        \Phi_{e,i}^-(U_e^-)^{|e\cap S|}
\end{equation}
among all $p$-subsets $S\subseteq X_i$. After this choice, Unbalancer chooses an arbitrary $q$-subset $T_{i+1}\subseteq X_i\setminus S_{i+1}$.

We compare the value of $\Phi_i$ before round $i+1$ and the value of $\Phi_{i+1}$ after this complete round. Fix an edge $e$ and put
\[
        a=|e\cap S_{i+1}|,
        \qquad
        b=|e\cap T_{i+1}|.
\]
After the round, the number $c_{e,i}$ has increased by $a+b$, and hence
$c_{e,i+1}=c_{e,i}+a+b$.
Since each Balancer vertex contributes $q$ to $D_{e,i}$ and each Unbalancer vertex contributes $-p$ to $D_{e,i}$, it follows from \eqref{eq:D} that
$D_{e,i+1}=D_{e,i}+aq-bp$.
Consequently, the new upper quantity is
\begin{equation}\label{equ-2-5-indexed}
\begin{aligned}
        \Phi_{e,i+1}^+
        &=
        e^{-t_eL_e^+}
        \bigl(R^+_{p,q}(t_e)\bigr)^{|e|-c_{e,i}-a-b}
        e^{t_e(D_{e,i}+aq-bp)}  \\[0.2cm]
        &=
        \Phi_{e,i}^+
        \left(\frac{e^{qt_e}}{R^+_{p,q}(t_e)}\right)^a
        \left(\frac{e^{-pt_e}}{R^+_{p,q}(t_e)}\right)^b  \\[0.2cm]
        &=
        \Phi_{e,i}^+(B_e^+)^a(U_e^+)^b.
\end{aligned}
\end{equation}
In the same way,
\begin{equation}\label{equ-2-6-indexed}
\begin{aligned}
        \Phi_{e,i+1}^-
        &=
        e^{-s_eL_e^-}
        \bigl(R^-_{p,q}(s_e)\bigr)^{|e|-c_{e,i}-a-b}
        e^{-s_e(D_{e,i}+aq-bp)} \\[0.2cm]
        &=
        \Phi_{e,i}^-
        \left(\frac{e^{-qs_e}}{R^-_{p,q}(s_e)}\right)^a
        \left(\frac{e^{ps_e}}{R^-_{p,q}(s_e)}\right)^b  \\[0.2cm]
        &=
        \Phi_{e,i}^-(U_e^-)^a(B_e^-)^b.
\end{aligned}
\end{equation}

We apply Lemma~\ref{lem2-2} with ground set $X_i$ and with
index set
$\mathcal I=\mathcal E\times\{+,-\}$.
Thus, for each edge $e\in\mathcal E$, we introduce two indexed terms:
one upper term, denoted by $(e,+)$, and one lower term, denoted by
$(e,-)$.
For the upper term we take
$E_{(e,+)}=e\cap X_i,
        w_{(e,+)}=\Phi_{e,i}^+,
        C_{(e,+)}=B_e^+$,
and
$M_{(e,+)}=U_e^+$.
For the lower term we take
$E_{(e,-)}=e\cap X_i,
        w_{(e,-)}=\Phi_{e,i}^-,
        C_{(e,-)}=U_e^-$,
and
        $M_{(e,-)}=B_e^-$.
Since each edge gives exactly two such terms, the index set has size
$|\mathcal I|=2|\mathcal E|$.
The required hypotheses of Lemma \ref{lem2-2} hold because
$ pB_e^+ +(U_e^+)^q=p+1$
and
 $pU_e^- +(B_e^-)^q=p+1$.
It remains to check that Balancer's chosen set $S_{i+1}$ is a minimizer of the function in \eqref{eq:selection-objective}.  For a $p$-subset $P\subseteq X_i$, the function appearing in Lemma~\ref{lem2-2} is
\[
        \sum_{r\in\mathcal I} w_r C_r^{|E_r\cap P|}.
\]
Since
 $\mathcal I=\mathcal E\times\{+,-\},$
this sum is
\[
        \sum_{e\in\mathcal E}
        w_{(e,+)} C_{(e,+)}^{|E_{(e,+)}\cap P|}
        +
        \sum_{e\in\mathcal E}
        w_{(e,-)} C_{(e,-)}^{|E_{(e,-)}\cap P|}.
\]
Using the definitions
$E_{(e,+)}=e\cap X_i, w_{(e,+)}=\Phi_{e,i}^+$,
        $C_{(e,+)}=B_e^+$,
and
$E_{(e,-)}=e\cap X_i, w_{(e,-)}=\Phi_{e,i}^-,
        C_{(e,-)}=U_e^-$,
and using $P\subseteq X_i$, we have
$|E_{(e,+)}\cap P|=|e\cap P|$
and
$ |E_{(e,-)}\cap P|=|e\cap P|$.
Therefore
\[
        \sum_{r\in\mathcal I} w_r C_r^{|E_r\cap P|}
        =
        \sum_{e\in\mathcal E}
        \Phi_{e,i}^+(B_e^+)^{|e\cap P|}
        +
        \sum_{e\in\mathcal E}
        \Phi_{e,i}^-(U_e^-)^{|e\cap P|}.
\]
The right hand side is precisely the quantity which Balancer minimizes in
\eqref{eq:strategy-objective-indexed}.  Hence the $p$-set $S_{i+1}$ chosen by Balancer satisfies the minimizing condition in Lemma~\ref{lem2-2}.
Thus, for the $q$-set $T_{i+1}$ chosen by Unbalancer, Lemma \ref{lem2-2} gives
$$
    \sum_{e\in\mathcal E}
        \Phi_{e,i}^+
        \left((B_e^+)^{|e\cap S_{i+1}|}
        (U_e^+)^{|e\cap T_{i+1}|}-1\right)+
        \sum_{e\in\mathcal E}
        \Phi_{e,i}^-
        \left((U_e^-)^{|e\cap S_{i+1}|}
        (B_e^-)^{|e\cap T_{i+1}|}-1\right)
        \le0.
$$

For each edge $e$, by \eqref{equ-2-5-indexed} and \eqref{equ-2-6-indexed}, the change of the upper potential of $e$ during round $i+1$ is
\[
        \Phi_{e,i+1}^+-\Phi_{e,i}^+
        =
        \Phi_{e,i}^+
        \left(
        (B_e^+)^{|e\cap S_{i+1}|}
        (U_e^+)^{|e\cap T_{i+1}|}
        -1
        \right),
\]
and the change of the lower potential of $e$ is
\[
        \Phi_{e,i+1}^--\Phi_{e,i}^-
        =
        \Phi_{e,i}^-
        \left(
        (U_e^-)^{|e\cap S_{i+1}|}
        (B_e^-)^{|e\cap T_{i+1}|}
        -1
        \right).
\]
Summing these two identities over all edges $e\in\mathcal E$ and using \eqref{equphi-indexed}, we obtain
\[
\begin{aligned}
        \Phi_{i+1}-\Phi_i
        &=
        \sum_{e\in\mathcal E}
        \Phi_{e,i}^+
        \left(
        (B_e^+)^{|e\cap S_{i+1}|}
        (U_e^+)^{|e\cap T_{i+1}|}
        -1
        \right) \\
        &\quad+
        \sum_{e\in\mathcal E}
        \Phi_{e,i}^-
        \left(
        (U_e^-)^{|e\cap S_{i+1}|}
        (B_e^-)^{|e\cap T_{i+1}|}
        -1
        \right).
\end{aligned}
\]
The right hand side is exactly the expression bounded above by $0$ by Lemma~\ref{lem2-2}.  Therefore
$\Phi_{i+1}-\Phi_i\le0$,
and hence
$\Phi_{i+1}\le\Phi_i$.
Thus the total potential does not increase after each complete round.

Since initially $\Phi_0\le1$, induction over the complete rounds gives
$\Phi_N\le1$
at the end of the game.
At the end of the game all vertices have been claimed, so $c_{e,N}=|e|$ for every edge $e$. Therefore
\[
        \Phi_{e,N}^+=e^{-t_eL_e^+}e^{t_eD_{e,N}}
        =
        e^{t_e(D_{e,N}-L_e^+)}
\]
and
\[
        \Phi_{e,N}^-=e^{-s_eL_e^-}e^{-s_eD_{e,N}}
        =
        e^{-s_e(D_{e,N}+L_e^-)}.
\]
Suppose that $D_{e,N}>L_e^+$ for some edge $e$.  Then
\[
        \Phi_{e,N}^+=e^{t_e(D_{e,N}-L_e^+)}>1.
\]
Since all terms in $\Phi_N$ are nonnegative, this implies $\Phi_N>1$, contradicting the bound obtained above. Hence
        $D_{e,N}\le L_e^+$ for $e\in\mathcal E$.
Similarly, if $D_{e,N}<-L_e^-$ for some edge $e$, then
\[
        \Phi_{e,N}^-=e^{-s_e(D_{e,N}+L_e^-)}>1,
\]
again contradicting $\Phi_N\le1$.  Therefore
$D_{e,N}\ge -L_e^-
        (e\in\mathcal E)$.
Combining the two inequalities gives
$ -L_e^-\le D_{e,N}\le L_e^+
  (e\in\mathcal E)$.
Finally, since
\[
        D_{e,N}=q|B\cap e|-p|U\cap e|=(p+q)|B\cap e|-p|e|,
\]
this is equivalent to
\[
        -\frac{L_e^-}{p+q}
        \le
        |B\cap e|-\frac{p}{p+q}|e|
        \le
        \frac{L_e^+}{p+q}.
\]
This proves Theorem~\ref{thm:main}.
\end{proof}

\begin{remark}
The normalizing equations defining $R^+_{p,q}$ and $R^-_{p,q}$ are forced by a simple matching obstruction.  Consider $p+1$ pairwise disjoint $q$-edges with equal initial weights.  After Balancer has claimed $p$ vertices, at least one of these edges is untouched, and Unbalancer may claim all $q$ vertices of this edge. Hence any separable one-round multiplicative potential with Balancer multiplier $B$ and Unbalancer multiplier $A$ must satisfy \(pB+A^q\le p+1\). For the upper-tail exponential potential, where \(B=e^{qt}/R\) and \(A=e^{-pt}/R\), this necessary condition becomes \(p e^{qt}/R+(e^{-pt}/R)^q\le p+1\).  Thus the smallest admissible normalization is precisely \(R^+_{p,q}(t)\). The lower-tail normalization \(R^-_{p,q}(s)\) is obtained in the same way, with the roles of the two multipliers reversed.
\end{remark}

Theorem~\ref{thm:main} recovers the two standard comparison results stated in
Theorems~\ref{thm:AKSS} and~\ref{thm:beck-uniform}.  
We first derive Theorem~\ref{thm:AKSS} from Theorem ~\ref{thm:main} by taking $p=q=1$.

\begin{proof}[Proof of Theorems~\ref{thm:AKSS}]
Put $p=q=1$. Then the defining
relations \eqref{eq:R-plus} and \eqref{eq:R-minus} become
\[
        \frac{e^t}{R}+\frac{e^{-t}}{R}=2,
        \qquad
        \frac{e^{-s}}{R}+\frac{e^s}{R}=2,
\]
and hence
$R^+_{1,1}(t)=R^-_{1,1}(t)=\cosh t$.
Let $H=(V,\mathcal E)$ with
$\mathcal E=\{e_1,\ldots,e_m\}$, and assume that the hypothesis of
Theorem~\ref{thm:AKSS} holds.  For each $i$ take
\[
        L_{e_i}^+=L_{e_i}^-=b_i,
        \qquad
        t_{e_i}=s_{e_i}=\frac{b_i}{|e_i|}.
\]
If the hypothesis is non-vacuous, then $b_i>0$ for every nonempty edge, so
these parameters are admissible.  Using $\cosh x\le \exp(x^2/2)$, we get
\[
\begin{aligned}
        &\sum_{i=1}^m e^{-t_{e_i}b_i}
        \bigl(R^+_{1,1}(t_{e_i})\bigr)^{|e_i|}
        +
        \sum_{i=1}^m e^{-s_{e_i}b_i}
        \bigl(R^-_{1,1}(s_{e_i})\bigr)^{|e_i|}       \\
        &\qquad\le
        2\sum_{i=1}^m
        \exp\!\left\{-\frac{b_i^2}{|e_i|}
        +\frac{|e_i|}{2}\left(\frac{b_i}{|e_i|}\right)^2\right\}
        =
        2\sum_{i=1}^m
        \exp\!\left\{-\frac{b_i^2}{2|e_i|}\right\}
        \le 1.
\end{aligned}
\]
Thus the condition of Theorem~\ref{thm:main} is satisfied. Since for
$p=q=1$ we have
$D_e=|B\cap e|-|U\cap e|$, Theorem~\ref{thm:main} gives
$\bigl||B\cap e_i|-|U\cap e_i|\bigr|\le b_i$ for every $i$, which is
Theorem~\ref{thm:AKSS}.
\end{proof}

Then, we obtain Theorem~\ref{thm:beck-uniform} by Theorem ~\ref{thm:main}

\begin{proof}[Proof of Theorem ~\ref{thm:beck-uniform}]
Fix $p,q$, and let
$H=(V,\mathcal E)$ be $n$-uniform with $m=|\mathcal E|$. If necessary, add
dummy vertices belonging to no edge so that the total number of board vertices
is divisible by $p+q$; this does not affect any edge discrepancy.  We shall use
common parameters and common targets
$L_e^+=L_e^-=L$ and
$t_e=s_e=t
~ (e\in\mathcal E)$.
Write
$r^+(x)=\log R^+_{p,q}(x)$,
and $r^-(x)=\log R^-_{p,q}(x)$.
By the implicit function theorem applied to \eqref{eq:R-plus} and
\eqref{eq:R-minus} at $(x,r)=(0,0)$, the functions $r^+$ and $r^-$ are smooth
near $0$.  Differentiating the defining equations gives
$r^+(0)=r^-(0)=0$,
$(r^+)'(0)=(r^-)'(0)=0$,
and
\[
        (r^+)''(0)=(r^-)''(0)=
        \frac{p(p+1)q^2}{p+q}.
\]
Consequently, as $x\to0^+$,
\begin{equation}\label{eq:R-Taylor-for-corollary}
        \log R^+_{p,q}(x),\ \log R^-_{p,q}(x)
        \le
        \frac{p^2q^2}{p+q}x^2+O_{p,q}(x^3),
\end{equation}
since $p(p+1)/2\le p^2$.

Put
\[
        \Lambda=\log(2m),
        \qquad
        b=\frac{p^2q^2}{p+q}.
\]
We first consider the case where \(\sqrt{\Lambda/n}\) is bounded below by a
positive constant depending only on \(p,q\).  Since \(H=(V,\mathcal E)\) is
\(n\)-uniform, 
 for every \(e\in\mathcal E\),
$0\le |B\cap e|\le n,$
and therefore
\[
        \left|
        |B\cap e|-\frac{p}{p+q}n
        \right|
        \le n.
\]
On the other hand, in the present case, by choosing the constant
\(C=C(p,q)\) sufficiently large, we may ensure that
\[
        \frac{1}{p+q}
        2\sqrt{bn\Lambda}
        \left(1+C\sqrt{\frac{\Lambda}{n}}\right)
        \ge n.
\]
Consequently,
\[
        \left|
        |B\cap e|-\frac{p}{p+q}n
        \right|
        \le
        \frac{1}{p+q}
        2\sqrt{bn\Lambda}
        \left(1+C\sqrt{\frac{\Lambda}{n}}\right)
        \qquad(e\in\mathcal E).
\]
Thus the desired bound already holds in this case.  Hence, in the rest of
the proof, we may assume that \(\sqrt{\Lambda/n}\) is sufficiently small,
where the required smallness depends only on \(p,q\).
Thus we may assume from now on that \(\sqrt{\Lambda/n}\) is sufficiently small.
Choose a sufficiently large
constant $C=C(p,q)$ and set
\[
        L=2\sqrt{bn\Lambda}
        \left(1+C\sqrt{\frac{\Lambda}{n}}\right),
        \qquad
        t=\frac{L}{2bn}.
\]
Then $t$ is small, and \eqref{eq:R-Taylor-for-corollary} gives, for the same
choice $t=s$,
\[
\begin{aligned}
        -tL+n\log R^+_{p,q}(t)
        &\le
        -\frac{L^2}{2bn}
        +\frac{L^2}{4bn}
        +O_{p,q}(nt^3)       \\
        &=
        -\frac{L^2}{4bn}+O_{p,q}(nt^3)
        \le -\Lambda,
\end{aligned}
\]
and the same estimate holds with $R^-_{p,q}$ in place of $R^+_{p,q}$. Thus, we have 
\[
        e^{-tL}\bigl(R^+_{p,q}(t)\bigr)^n\le e^{-\Lambda},
        \qquad
        e^{-tL}\bigl(R^-_{p,q}(t)\bigr)^n\le e^{-\Lambda}.
\]
Therefore
\[
        \sum_{e\in\mathcal E} e^{-tL}
        \bigl(R^+_{p,q}(t)\bigr)^n
        +
        \sum_{e\in\mathcal E} e^{-tL}
        \bigl(R^-_{p,q}(t)\bigr)^n
        \le
        2m e^{-\Lambda}=1.
\]
Theorem~\ref{thm:main} now yields
$|D_e|\le L$ for every edge $e$.  Since
$D_e=(p+q)|B\cap e|-pn$, we obtain
\[
        \left|
        |B\cap e|-\frac{p}{p+q}n
        \right|
        \le
        \frac{L}{p+q}
        =
        \frac{\alpha n}{p+q},
\]
where
\[
        \alpha=\frac{L}{n}
        =
        \left(
        1+O_{p,q}\!\left(
        pq\sqrt{\frac{\log(2m)}{(p+q)n}}
        \right)
        \right)
        2pq
        \sqrt{\frac{\log(2m)}{(p+q)n}}.
\]
This is the claimed uniform biased, Beck-type consequence of
Theorem~\ref{thm:main}.
\end{proof}

\section{Proof of Theorem~\ref{thm:pseudorandom}}\label{sec:proof-pseudorandom}

We shall use the following local estimate for the functions
$R^+_{p,q}$ and $R^-_{p,q}$.

\begin{lemma}\label{lem:local-quadratic}
For fixed positive integers $p,q$, there exist constants
$A=A(p,q)>0$ and $\tau=\tau(p,q)>0$ such that
\[
        \log R^+_{p,q}(x)\le A x^2,
        \qquad
        \log R^-_{p,q}(x)\le A x^2
        \qquad(0\le x\le \tau).
\]
\end{lemma}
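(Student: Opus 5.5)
The plan is to prove an explicit inequality rather than rely on the Taylor expansion already recorded in the proof of Theorem~\ref{thm:beck-uniform}. Set $R=e^{Ax^2}$ in the defining equation \eqref{eq:R-plus}. Since its left hand side is strictly decreasing in $R$, we have $R^+_{p,q}(x)\le e^{Ax^2}$ as soon as
\[
        p\,e^{qx-Ax^2}+e^{-pqx-qAx^2}\le p+1 .
\]
Write $\phi(x)=p\,e^{qx-Ax^2}+e^{-pqx-qAx^2}$. Then $\phi(0)=p+1$, and $\phi'(0)=pq-pq=0$. Hence it suffices to show $\phi''(x)\le 0$ on $[0,\tau]$.

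Compute
\[
        \phi''(x)=p\bigl((q-2Ax)^2-2A\bigr)e^{qx-Ax^2}
        +q\bigl(q(p+2Ax)^2-2A\bigr)e^{-pqx-qAx^2}.
\]
At $x=0$ this equals $pq^2+p^2q^2-2A(p+q)$. It is strictly negative once
\[
        A>\frac{p(p+1)q^2}{2(p+q)},
\]
and we take, say, $A=p(p+1)q^2/(p+q)$. This agrees with the value of $(r^+)''(0)$ found earlier. With $A$ fixed, $\phi''$ is continuous, so there is $\tau>0$, depending only on $p,q$, with $\phi''<0$ on $[0,\tau]$. Then $\phi(x)\le p+1$ on $[0,\tau]$, which gives $\log R^+_{p,q}(x)\le Ax^2$ there. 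The endpoint $x=0$ holds trivially, since $R^+_{p,q}(0)=1$.

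The lower factor is handled the same way. Substitute $R=e^{Ax^2}$ into \eqref{eq:R-minus} to get
\[
        \psi(x)=p\,e^{-qx-Ax^2}+e^{pqx-qAx^2}.
\]
Again $\psi(0)=p+1$ and $\psi'(0)=0$. Its second derivative at $0$ is $pq^2+p^2q^2-2A(p+q)$, the same value as before. So the same $A$ works, possibly with a smaller $\tau$, and we take $\tau$ to be the minimum of the two.

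The only delicate point is making sure $\tau$ depends only on $p,q$. This holds because $\phi''$ and $\psi''$ are explicit continuous functions of $x$ whose coefficients depend only on $p,q$. If an explicit $\tau$ is wanted, crude bounds on the exponentials for $x\le 1$ give, for example, $\tau=c/(pq(p+q))$ for a small absolute constant $c$.
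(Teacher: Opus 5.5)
Your proof is correct, and it takes a different route from the paper's.

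\textbf{The paper's route.} It works with $r(t)=\log R^+_{p,q}(t)$ directly. It applies the implicit function theorem to $F(t,r)=pe^{qt-r}+e^{-pqt-qr}-(p+1)$ at $(0,0)$. Matching Taylor coefficients then gives $r(t)=\beta t^2+O_{p,q}(t^3)$ with $\beta=pq^2(p+1)/(2(p+q))$, and the paper takes $A>\beta$ and $\tau$ small.

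\textbf{Your route.} You use the strict monotonicity of the defining equation in $R$ as a comparison principle. Substituting the candidate $R=e^{Ax^2}$ reduces the lemma to the one-variable inequality $\phi(x)\le p+1$. This follows from $\phi(0)=p+1$, $\phi'(0)=0$ and $\phi''<0$ near $0$. The same threshold $\beta$ appears as the sign change of $\phi''(0)=pq^2(p+1)-2A(p+q)$.

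\textbf{What each buys.} Your argument is more elementary and self-contained. It needs no implicit function theorem and no unquantified $O(t^3)$ term. It also never has to identify the local implicit-function branch with $\log R^{\pm}_{p,q}$. It gives an explicit admissible $A$ (any $A>\beta$) and a $\tau$ that is effectively computable from continuous functions of $p,q$. The paper's expansion gives the exact second-order behaviour $\log R^{\pm}_{p,q}(t)=\beta t^2+O_{p,q}(t^3)$, showing that $\beta$ is the optimal constant. Your comparison yields this sharpness as well: for $A<\beta$ one has $\phi''(0)>0$, hence $\phi(x)>p+1$, hence $R^+_{p,q}(x)>e^{Ax^2}$ for small $x>0$.
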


\begin{proof}
We prove the assertion for $R^+_{p,q}$; the proof for
$R^-_{p,q}$ is the same.  
Since $R^+_{p,q}(t)>0$, we write
$R^+_{p,q}(t)=e^{r(t)}$,
that is, $r(t)=\log R^+_{p,q}(t)$.
Substituting this into \eqref{eq:R-plus}, we obtain
\begin{equation}\label{equ3-1}
p e^{qt-r(t)}+e^{-pqt-qr(t)}=p+1.
\end{equation}
To justify the expansion of $r(t)$ near $t=0$, define
\[
        F(t,r)=p e^{qt-r}+e^{-pqt-qr}-(p+1).
\]
Then,
$F(0,0)=0$
and
\[
        \frac{\partial F}{\partial r}(0,0)
        =
        -p-q
        \ne0.
\]
Hence the equation $F(t,r)=0$ determines, for $t$ sufficiently close to $0$, a unique analytic function $r=r(t)$ with $r(0)=0$.  Therefore the Taylor expansion of $r(t)$ has the form
$r(t)=\alpha t+\beta t^2+O_{p,q}(t^3)$,
where $\alpha$ and $\beta$ are constants depending only on $p,q$.
Substituting this expansion into \eqref{equ3-1}
gives
\begin{equation}\label{equ3-2}
p e^{(q-\alpha)t-\beta t^2+O_{p,q}(t^3)}
        +
        e^{(-pq-q\alpha)t-q\beta t^2+O_{p,q}(t^3)}
        =
        p+1.
\end{equation}
Using $e^{at+O(t^2)}=1+at+O(t^2)$, the coefficient of $t$ in the left hand side is
        $p(q-\alpha)+(-pq-q\alpha)
        =
        -(p+q)\alpha$.
The right hand side $p+1$ has no term of order $t$, and hence this coefficient must be zero.  Therefore $\alpha=0$, and hence \eqref{equ3-2} becomes
\[
        p e^{qt-\beta t^2+O_{p,q}(t^3)}
        +
        e^{-pqt-q\beta t^2+O_{p,q}(t^3)}
        =
        p+1.
\]
Using
\[
        e^{at+bt^2+O(t^3)}
        =
        1+at+\left(b+\frac{a^2}{2}\right)t^2+O(t^3),
\]
we have
\begin{equation}\label{eq-p}
 p e^{qt-\beta t^2+O_{p,q}(t^3)}
    =
        p\left(
        1+qt+\left(\frac{q^2}{2}-\beta\right)t^2
        +O_{p,q}(t^3)
        \right),
\end{equation}
and
\begin{equation}\label{eq-q} e^{-pqt-q\beta t^2+O_{p,q}(t^3)}
        =
        1-pq\,t+
        \left(\frac{p^2q^2}{2}-q\beta\right)t^2
        +O_{p,q}(t^3).
\end{equation}
By \eqref{eq-p} and \eqref{eq-q}, we give
\[
\begin{aligned}
        p e^{qt-\beta t^2+O_{p,q}(t^3)}
        &+
        e^{-pqt-q\beta t^2+O_{p,q}(t^3)}  \\
        &=
        p+1+
        \left[
        p\left(\frac{q^2}{2}-\beta\right)
        +
        \left(\frac{p^2q^2}{2}-q\beta\right)
        \right]t^2
        +O_{p,q}(t^3).
\end{aligned}
\]
Since the right hand side of \eqref{equ3-1} is the constant $p+1$,
the coefficient of $t^2$ must be zero. Hence
\[
        p\left(\frac{q^2}{2}-\beta\right)
        +
        \left(\frac{p^2q^2}{2}-q\beta\right)
        =
        0.
\]
Equivalently,
\[
        \frac{p q^2(p+1)}{2}-(p+q)\beta=0,
\]
and therefore
\[
        \beta=\frac{p q^2(p+1)}{2(p+q)}.
\]
Consequently,
\[
        \log R^+_{p,q}(t)
        =
        r(t)
        =
        \frac{p q^2(p+1)}{2(p+q)}t^2
        +
        O_{p,q}(t^3).
\]
The same computation applied to \eqref{eq:R-minus} gives
\[
        \log R^-_{p,q}(s)
        =
        \frac{p q^2(p+1)}{2(p+q)}s^2+O_{p,q}(s^3).
\]
Choosing $A=A(p,q)$ sufficiently large and then choosing
$\tau=\tau(p,q)>0$ sufficiently small proves the lemma.
\end{proof}

\begin{proof}[Proof of Theorem~\ref{thm:pseudorandom}]
Let $A$ and $\tau$ be the constants from Lemma~\ref{lem:local-quadratic}.
We shall choose the constants in the statement in the following order.  First choose
$K=K(p,q)$ so large that
\[
        \frac{(p+q)^2K^2}{4A}\ge 3.
\]
Next choose $\eta_0=\eta_0(p,q)>0$ so small that
\[
        0<\eta_0\le \frac14
        \qquad\text{and}\qquad
        \frac{(p+q)\eta_0}{4A}\le \tau.
\]
Finally choose $C=C(p,q)$ sufficiently large.  During the proof we shall increase
$C$ several times, always depending only on $p,q$.

Let $0<\eta\le\eta_0$, and put
        $k=\lceil \eta n\rceil$.
By increasing $C$ if necessary, the assumption
$\eta^3 n\ge C\log(e/\eta)$
implies
        $\eta n\ge1$
        and hence 
        $k\le 2\eta n$.
It also implies that $n$ is sufficiently large for all estimates below.

We first prove the assertion in the complete-round formulation.  The board of the
auxiliary discrepancy game is the edge set of $K_n$, that is, $W=E(K_n)$.
A set claimed by Balancer in this auxiliary game will be regarded as the edge set
of the graph $G_B$.

We define an auxiliary hypergraph
$\mathcal H=(W,\mathcal F)$
as follows.  The first family of hyperedges consists of the stars
$F_v=\{vx:x\in[n]\setminus\{v\}\}
  (v\in[n])$.
Thus
        $|F_v|=n-1$.
The second family of hyperedges consists of bipartite edge sets
$F_{S,T}=\{xy:x\in S,\ y\in T\}$,
where $S,T\subseteq[n]$ are disjoint $k$-sets. Thus
        $|F_{S,T}|=k^2$.
We shall apply Theorem~\ref{thm:main} to this auxiliary hypergraph.

For every star $F_v$, set
\begin{equation}\label{3-5}
L_v^+=L_v^-=(p+q)K\sqrt{(n-1)\log(2n)}.
\end{equation}
We choose the same exponential parameter for the two tails:
\begin{equation}\label{3-6}
t_v=s_v=
        \frac{L_v^+}{2A(n-1)}
        =
        \frac{(p+q)K}{2A}
        \sqrt{\frac{\log(2n)}{n-1}}.
\end{equation}
After increasing $C$, the hypothesis on $\eta^3 n$ guarantees that
$n$ is large enough so that
$t_v=s_v\le\tau$
for $v\in[n]$.

By Lemma~\ref{lem:local-quadratic},
$\log R^+_{p,q}(t_v)\le A t_v^2$ and $\log R^-_{p,q}(s_v)\le A s_v^2$. 
Therefore, by \eqref{3-5} and \eqref{3-6}, each upper star summand in \eqref{eq:main-condition} is at most
\[
\begin{aligned}
        e^{-t_vL_v^+}
        \bigl(R^+_{p,q}(t_v)\bigr)^{n-1}
        &\le
        \exp\{-t_vL_v^+ + A(n-1)t_v^2\}  \\
        &=
        \exp\left\{
        -\frac{(L_v^+)^2}{4A(n-1)}
        \right\}  \\
        &=
        \exp\left\{
        -\frac{(p+q)^2K^2}{4A}\log(2n)
        \right\}.
\end{aligned}
\]
The same estimate holds for the lower star summand.  Since
\[
        \frac{(p+q)^2K^2}{4A}\ge3,
\]
the total contribution of the upper and lower tails of all stars is at most
$2n(2n)^{-3}<\frac14$
for $n\ge3$.

We now consider the bipartite hyperedges $F_{S,T}$.  For every ordered pair
of disjoint $k$-sets $S,T$, set
\begin{equation}\label{3-7}
L_{S,T}^+=L_{S,T}^-=(p+q)\frac{\eta}{2}k^2.
\end{equation}
Again choose the same exponential parameter for the two tails:
\begin{equation}\label{3-8}
t_{S,T}=s_{S,T}
        =
        \frac{L_{S,T}^+}{2A k^2}
        =
        \frac{(p+q)\eta}{4A}.
\end{equation}
The choice of $\eta_0$ gives
$t_{S,T}=s_{S,T}\le\tau$.
Thus Lemma~\ref{lem:local-quadratic} gives
$\log R^+_{p,q}(t_{S,T})\le A t_{S,T}^2$
and $\log R^-_{p,q}(s_{S,T})\le A s_{S,T}^2$.
Hence, by \eqref{3-7} and \eqref{3-8}, each upper bipartite summand is at most
\[
\begin{aligned}
        e^{-t_{S,T}L_{S,T}^+}
        \bigl(R^+_{p,q}(t_{S,T})\bigr)^{k^2}
        &\le
        \exp\{-t_{S,T}L_{S,T}^+ + A k^2t_{S,T}^2\}  \\
        &=
        \exp\left\{
        -\frac{(L_{S,T}^+)^2}{4Ak^2}
        \right\}  \\
        &=
        \exp\left\{
        -\frac{(p+q)^2}{16A}\eta^2k^2
        \right\}.
\end{aligned}
\]
The lower bipartite summand satisfies the same bound.  Put
\[
        \gamma=\frac{(p+q)^2}{16A}.
\]
The number of ordered pairs of disjoint $k$-subsets is at most
$\binom nk^2$.
Using
$\binom nk\le \left(\frac{en}{k}\right)^k$
and $k\ge \eta n$,
we get
\[
        \binom nk^2
        \le
        \left(\frac{en}{k}\right)^{2k}
        \le
        \left(\frac e\eta\right)^{2k}.
\]
Therefore the total contribution of all upper and lower bipartite tails is at most
$2\exp\left\{
        2k\log(e/\eta)-\gamma\eta^2k^2
        \right\}$.
Since $k\ge\eta n$, we have
$\eta^2k^2\ge \eta^3nk$.
Thus the exponent is at most
$2k\log(e/\eta)-\gamma\eta^3nk$.
By increasing $C$ so that
$\gamma C\ge5$,
the hypothesis
$ \eta^3 n\ge C\log(e/\eta)$
implies
$2k\log(e/\eta)-\gamma\eta^3nk
        \le
        -3k\log(e/\eta)$.
As $k\ge1$ and $\log(e/\eta)\ge1$, the total contribution of all bipartite
hyperedges is less than
$2e^{-3}<\frac14$.

Combining the star and bipartite estimates, the sum in \eqref{eq:main-condition}
for the auxiliary hypergraph is strictly smaller than $1$.  Theorem~\ref{thm:main}
therefore applies.  Hence Balancer has a strategy such that, for every vertex
$v\in[n]$,
\[
        \left|
        |B\cap F_v|-\rho |F_v|
        \right|
        \le
        \frac{L_v^+}{p+q}
        =
        K\sqrt{(n-1)\log(2n)}.
\]
Since $B$ is the edge set of $G_B$, we have
$|B\cap F_v|=d_{G_B}(v)$
and
$|F_v|=n-1$.
Thus
\[
        \left|
        d_{G_B}(v)-\rho(n-1)
        \right|
        \le
        K\sqrt{(n-1)\log(2n)}
        \qquad(v\in[n]).
\]
After increasing the constant $C=C(p,q)$ in the statement, this gives
\[
        \left|
        d_{G_B}(v)-\rho(n-1)
        \right|
        \le
        C\sqrt{n\log n}
        \qquad(v\in[n])
\]
in the complete-round setting.

Similarly, for every ordered pair of disjoint $k$-sets $S,T$, Theorem~\ref{thm:main}
gives
\[
        \left|
        |B\cap F_{S,T}|-\rho |F_{S,T}|
        \right|
        \le
        \frac{L_{S,T}^+}{p+q}
        =
        \frac{\eta}{2}k^2.
\]
Since
$|B\cap F_{S,T}|=e_{G_B}(S,T)$
and $|F_{S,T}|=k^2$,
we obtain
\[
        \left|
        e_{G_B}(S,T)-\rho k^2
        \right|
        \le
        \frac{\eta}{2}k^2
\]
for all disjoint $k$-sets $S,T$.

It remains to discuss the possible last incomplete round.  If the board size is not
divisible by $p+q$, or if the rules force a final incomplete round, then at most
$p+q-1$ final claims are outside the complete-round analysis.  Such claims can change
the degree of any vertex by at most $p+q-1$, and can change the value of
$e_{G_B}(S,T)$ for any fixed pair $S,T$ by at most $p+q-1$.  Hence the degree estimate
gets an additional $O_{p,q}(1)$ term.  For the bipartite estimates, by increasing
$C$ once more we may assume
$p+q-1\le \frac{\eta}{2}k^2$.
Indeed, since $k\ge\eta n$, we have
$ \eta k^2\ge \eta^3 n^2\ge \eta^3 n\ge C\log(e/\eta)$,
and the right hand side can be made larger than any fixed constant depending only on
$p,q$.  Therefore, in the original game, for all disjoint $k$-sets $S,T$,
$\left|
        e_{G_B}(S,T)-\rho k^2
        \right|
        \le
        \eta k^2$.

Finally we pass from $k$-sets to arbitrary larger sets.  Let $S,T\subseteq[n]$ be
disjoint sets with
        $|S|,|T|\ge\eta n$.
Since $k=\lceil\eta n\rceil$, we have $k\le |S|$ and $k\le |T|$.  Choose uniformly
random $k$-subsets
$S'\subseteq S$ and
        $T'\subseteq T$.
For every choice of $S'$ and $T'$, the sets are disjoint, and hence the estimate just
proved gives
\begin{equation}\label{3-9}
\left|
        e_{G_B}(S',T')-\rho k^2
        \right|
        \le
        \eta k^2.
\end{equation}
Now each edge between $S$ and $T$ is
chosen in $e_{G_B}(S',T')$ with probability
\[
        \frac{k}{|S|}\cdot\frac{k}{|T|}
        =
        \frac{k^2}{|S||T|}.
\]
Therefore
\[
        \mathbb E[\,e_{G_B}(S',T')]
        =
        \frac{k^2}{|S||T|}e_{G_B}(S,T).
\]
Since \eqref{3-9}
holds for every choice of the \(k\)-subsets \(S'\subseteq S\) and
\(T'\subseteq T\), it also holds after averaging over all such choices. Hence
\[
        \left|
        \mathbb E [e_{G_B}(S',T')]-\rho k^2
        \right|
        \le
        \eta k^2 .
\]
Using
\[
        \mathbb E [e_{G_B}(S',T')]
        =
        \frac{k^2}{|S||T|}e_{G_B}(S,T),
\]
we obtain
\[
        \left|
        \frac{k^2}{|S||T|}e_{G_B}(S,T)-\rho k^2
        \right|
        \le
        \eta k^2 .
\]
Dividing by $k^2$ gives
\[
        \left|
        \frac{e_{G_B}(S,T)}{|S||T|}-\rho
        \right|
        \le
        \eta.
\]
Together with the degree estimate, this proves Theorem~\ref{thm:pseudorandom}.
\end{proof}

\section*{Acknowledgment}

We would like to express our gratitude to Professor Noga Alon for his reading and comments of this paper.


\begin{thebibliography}{99}

\bibitem{AKSS}
N. Alon, M. Krivelevich, J. Spencer and T. Szab\'o,
 Discrepancy games,
 \emph{Electronic J. Combin.} 12 (2005), Research Paper 51.

\bibitem{Beck1981Roth}
J. Beck,
 Roth's estimate of the discrepancy of integer sequences is nearly sharp,
 \emph{Combinatorica} 1 (1981), 319--325.


\bibitem{Beck81}
J. Beck,
 Van der Waerden and Ramsey type games,
 \emph{Combinatorica} 1 (1981), 103--116.

\bibitem{Beck}
J. Beck,
 \emph{Combinatorial Games: Tic-Tac-Toe Theory},
 Cambridge University Press, 2010.

\bibitem{BeckFiala1981}
J. Beck and T. Fiala,
 ``Integer-making'' theorems,
 \emph{Discrete Appl. Math.} 3(1)(1981), 1--8.

\bibitem{Chazelle}
B. Chazelle,
 \emph{The Discrepancy Method: Randomness and Complexity},
 Cambridge University Press, Cambridge, 2013.

\bibitem{vanderCorput1935}
J. G. van der Corput,
Verteilungsfunktionen I,
\emph{Proc. Kon. Ned. Akad. v. Wetensch.} 38 (1935), 813--821.

\bibitem{ErdosSelfridge}
P. Erd\H{o}s and J. L. Selfridge,
 On a combinatorial game,
 \emph{J. Combin. Theory, Ser. A} 14(3) (1973), 298--301.

\bibitem{FKPS}
A. Frieze, M. Krivelevich, O. Pikhurko and T. Szab\'o,
 The game of JumbleG,
 \emph{Combin. Prob. Comput.} 14 (2005), 783--793.

\bibitem{HJ63}
A. W. Hales and R. I. Jewett,
 Regularity and positional games,
 \emph{Trans. Amer. Math. Soc.} 106(2) (1963), 222--229.

\bibitem{HKSS14}
D. Hefetz, M. Krivelevich, M. Stojakovi\'c and T. Szab\'o,
 \emph{Positional Games},
 Oberwolfach Seminars, Vol. 44, Birkh\"auser/Springer, Basel, 2014.


\bibitem{HKS}
D. Hefetz, M. Krivelevich and T. Szab\'o,
 Bart--Moe games, JumbleG and discrepancy,
 \emph{European J. Combin.} 28(4) (2007), 1131--1143.

\bibitem{Lu}
X. Lu,
 A matching game,
 \emph{Discrete Math.} 94 (1991), 199--207.

\bibitem{Matousek}
J. Matou\v{s}ek,
 \emph{Geometric Discrepancy: An Illustrated Guide},
 Springer, Berlin, 1999.

\bibitem{Roth1954}
K. F. Roth,
 On irregularities of distribution,
 \emph{Mathematika} 1 (1954), 73--79.

\bibitem{Spencer85}
J. Spencer,
 Six standard deviations suffice,
 \emph{Trans. Amer. Math. Soc.} 289(2) (1985), 679--706.

\end{thebibliography}
\end{document}